\newcommand{\eq}{:=}
\newcommand{\grad}{\boldsymbol \nabla}
\renewcommand{\div}{\grad \cdot}
\newcommand{\ddiv}{\operatorname{div}}
\newcommand{\jmp}[1]{\,[\![#1]\!]}
\newcommand{\BA}{\boldsymbol A}
\newcommand{\BL}{\boldsymbol L}
\newcommand{\bn}{\boldsymbol n}
\newcommand{\bp}{\boldsymbol p}
\newcommand{\bq}{\boldsymbol q}
\newcommand{\bv}{\boldsymbol v}
\newcommand{\bz}{\boldsymbol z}
\newcommand{\CF}{\mathcal F}
\newcommand{\CH}{\mathcal H}
\newcommand{\CR}{\mathcal R}
\newcommand{\CT}{\mathcal T}
\newcommand{\CV}{\mathcal V}
\newcommand{\BCH}{\boldsymbol{\CH}}
\newcommand{\BCV}{\boldsymbol{\CV}}
\newtheorem{theorem}{Theorem}[section]
\newtheorem{lemma}[theorem]{Lemma}
\newtheorem{corollary}[theorem]{Corollary}
\newtheorem{assumption}[theorem]{Assumption}
\newtheorem{remark}[theorem]{Remark}
\newtheorem{proposition}[theorem]{Proposition}
\DeclareMathOperator*{\argmin}{arg\,min}
\title{An adaptive superconvergent finite element method based on local residual minimization\footnotetext{{\bf Funding:} IM acknowledges support from the project DI Investigación Innovadora Interdisciplinaria PUCV No. 039.409/2021, and project DI PUCV No. 039.375/2021. The work of SR was supported by the Chilean grant ANID FONDECYT No. 3210009. The research by PV was supported by the Chilean grants DI Postdoctorado 2021 (PUCV) and ANID FONDECYT No. 3220858. Finally, the authors have also received funding from the European Union’s Horizon 2020 research and innovation programme under the Marie Sklodowska-Curie grant agreement No. 777778 (MATHROCKS).}}
\author[1,2]{Ignacio Muga}
\author[1]{Sergio Rojas}
\author[1]{Patrick Vega}
\affil[1]{Instituto de Matem\'aticas, Pontificia Universidad Cat\'olica de Valpara\'{\i}so, Valpara\'{\i}so, Chile
	\texttt{\{ignacio.muga;sergio.rojas.h;patrick.vega\}@pucv.cl}}
\affil[2]{BCAM - Basque Center for Applied Mathematics, Bilbao, Spain}
\begin{document}
	\maketitle
	
	\begin{abstract}
We introduce an adaptive superconvergent finite element method for a class of mixed formulations to solve partial differential equations involving a diffusion term. It combines a superconvergent postprocessing technique for the primal variable with an adaptive finite element method via residual minimization. Such a residual minimization procedure is performed on a local postprocessing scheme, commonly used in the context of mixed finite element methods. Given the local nature of that approach, the underlying saddle point problems associated with residual minimizations can be solved with minimal computational effort. We propose and study a posteriori error estimators, including the built-in residual representative associated with residual minimization schemes; and an improved estimator which adds, on the one hand, a residual term quantifying the mismatch between discrete fluxes and, on the other hand, the interelement jumps of the postprocessed solution. We present numerical experiments in two dimensions using Brezzi-Douglas-Marini elements as input for our methodology. The experiments perfectly fit our key theoretical findings and suggest that our estimates are sharp.
	\end{abstract}
	
	{\bf Keywords.}
	residual minimization, postprocessing, superconvergence, a posteriori error analysis, adaptive mesh refinement
	
	{\bf AMS subject classifications.} 65N12, 65N15, 65N22, 65N30, 65N50
	
	\setcounter{tocdepth}{3}
	\tableofcontents
	
\section{Introduction}
Finite Element Methods (FEMs) constitute one of the most popular approaches for solving boundary value problems driven by partial differential equations (PDEs). FEMs are suited to handle heterogeneous media and/or complex geometries, mainly due to the great flexibility provided by unstructured meshes. In this paper, we will focus on PDEs ruled by diffusion phenomena, where the flux $\bq$ of a concentration $u$ is described by the equation $\bq=-\grad u$ (i.e., the flux will move from areas of higher concentration to areas of lower concentration). Mixed finite element methods \cite{boffi2013mixed,ern_guermond_2021b} are suitable to solve such PDEs when the dual variable $ \bq $ is required as a fundamental unknown\footnote{In contrast to primal-based methods, where this variable must be obtained a posteriori by differentiation, entailing a loss of accuracy \cite{arnold}.}.

Many postprocessing techniques \cite{ainsworth_ma_2012,arnold_brezzi_1985,bramble_xu_1989,stenberg_1991} have been offered to obtain better approximations for the primal variable $  u $ in the context of mixed finite element methods. The main idea is to enhance the approximation of $u$ using minimal computational effort (hence, by performing local computations at the element level, which can be efficiently parallelized).

We will focus on the postprocessing technique proposed by Stenberg \cite{stenberg_1991} for its conceptual simplicity. This postprocessing method (and those closely related to it) has been successfully employed in conjunction with different discretization schemes to solve a wide range of PDEs, illustrating its versatility. Among them, we highlight mixed methods \cite{MR2801446}, Discontinuous Galerkin (DG) methods \cite{MR2448694}, Discontinuous Petrov-Galerkin (DPG) methods \cite{MR3977484} and mimetic finite difference methods \cite{MR2365823}. In particular, a large number of works have used the combination of Hybridizable DG (HDG) methods with the Stenberg's postprocessing technique on a wide variety of equations\footnote{See, e.g., \cite{MR4246271} for a exhaustive review of the state-of-the-art on HDG methods, most of them employing Stenberg's postprocessing.}.

Regarding practical implementations of FEMs, mesh (or $ p\ \!$\!-) adaptivity has become an attractive strategy to limit the computational cost required for the discrete resolution of boundary value problems. Such a local adaptation procedure is commonly guided by an iterative process, driven by an a posteriori error estimator, which quantitatively \emph{measure} the discretization error, letting practitioners decide if the accuracy of the numerical solution is enough for their applications. Consequently, the design and study of a posteriori error estimators have been a field of great interest in the last 40 years (see, e.g., \cite{ainsworth_oden_2000a,demkowicz_2006a,verfurth_1994} and the references therein). An alternative to the design of a posteriori error estimators is the use of finite element methods based on residual minimization, which tend to incorporate a built-in localizable \emph{error representative} that may be used to quantify the error and drive adaptive mesh refinements. The common procedure uses a mixed form to simultaneously solve the discrete solution and its \emph{error representative}.

In this paper, we combine the local postprocessing idea with the concept of residual minimization to come up with a local postprocessing method that delivers, at the same time, a superconvergent postprocessed (scalar) solution and a superconvergent local error representative that is used to drive adaptive mesh refinements. The mixed system related to the residual minimization is local (i.e., element-by-element) and therefore, the additional computational efforts to achieve superconvergence are negligible. We show that our postprocessed solution coincides with the one proposed by Stenberg and therefore satisfies the same a priori error estimates. Besides, we propose and study a posteriori error estimators based on the built-in residual representative associated with residual minimization schemes.

The methodology is quite general and can be applied to any problem exhibiting the equation $\bq=-\grad u$ in its mixed form. This may include the presence of convective or reactive terms. Moreover, we can incorporate variable diffusion coefficients or deal with vectorial equations (such as Stokes-like problems). The design of our estimators is (almost) independent of the underlying PDE (since it only depends on the constitutive equation $ \bq=-\grad u $) and the initial choice of finite element spaces discretizing the mixed form.

We illustrate our theoretical findings through: the behavior of effectivity indices for uniform mesh refinements in the case of smooth solutions; the capability of our estimators to drive adaptive mesh refinements in the presence of singularities (recovering optimal convergence rates); and the application of our methodology to convection-diffusion problems.
\subsection{A review of related literature}
a posteriori error estimators employing postprocessing techniques may be found in the works \cite{MR2914423,MR3082572,MR2365826,MR2240629}.
The first work in this context, due to Lovadina and Stenberg \cite{MR2240629}, is based on the a posteriori error analysis of an equivalent method, which involves the postprocessed approximation proposed in \cite{stenberg_1991}. In contrast, \cite{MR2914423} and \cite{MR2365826} plug the postprocessed approximation into the estimator without considering any equivalent scheme, while \cite{MR3082572} used the difference between the primal discrete solution and its postprocessing to drive $ p $-adaptivity. Moreover, in \cite{MR2914423}, the authors provide estimates based on a slight modification of the postprocessing scheme \cite{stenberg_1991} in the context of HDG methods. On the other hand, \cite{MR2365826} generalizes the work in \cite{MR2240629} in the following sense: firstly, it can use any piecewise polynomial approximation of the primal variable when computing the residual; and secondly, it can also be applied to stabilized mixed methods.

Among other methods that include a posteriori error estimators employing postprocessing techniques, we highlight the work in mixed FEM \cite{MR2403589}, hybrid mixed FEM \cite{MR3167450}, and HDG methods for: Kirchhoff-Love plates \cite{MR3902886}; Stokes/Brinkman equations \cite{araya_solano_vega_2019a}; and Oseen equations \cite{araya_solano_vega_2019b}, to name a few of them.

In the context of FEMs based on residual minimization using the residual representative to drive adaptive refinements, we emphasize Least-Squares Finite Element Methods (LS-FEM)~\cite{MR2765484,MR1615154}, DPG methods~\cite{CarDemGop_SINUM,DPG3}, 
and the recent work on Adaptive Stabilized Finite Element Method (AS-FEM)~\cite{calo2020adaptive}.

\subsection{Outline of the paper}
The remainder of the paper is organized as follows. In Section \ref{sec:settings}, we introduce notation, the model problem, and a mixed finite element discretization to solve it. We also introduce the Stenberg's postprocessing scheme for the mixed FEM. In Section \ref{sec:main_results}, we present the postprocessing based on local residual minimization, and our main results, related with a posteriori error estimates. We report numerical experiments that illustrate our key findings in Section \ref{sec:experiments}, and draw our conclusions in Section \ref{sec:conclusions}. We also provide an appendix with some auxiliary results needed for the a posteriori theory, namely: the existence of a Fortin operator, and intermediate estimates needed for the proof of the reliability result.

\section{Theoretical background}\label{sec:settings}

\subsection{Notation}\label{notation}

Let $ \Omega\subset\mathbb{R}^d $ ($ d=2,3 $) be a polyhedral Lipschitz domain.  At the continuous level,  we denote by $\CV \eq L^2(\Omega) $ the space of scalar real-valued square-integrable functions defined over $ \Omega $, while its vector-valued version will be denoted by $\BCV \eq \BL^2(\Omega) $. Additionally, $\BCH \eq H(\ddiv, \Omega)$ will denote the space of functions belonging to  $\BCV$ such that its divergence belongs to $\CV$. Besides, $ H^s(\Omega)\eq W^{s,2}(\Omega)$ will denote the usual Sobolev space of square-summable functions up to the $s^{\hbox{\tiny th}}$-derivative order (see~\cite{adams_fournier_2003a}), and $H^{1/2}(\partial\Omega)$ will denote the space of \emph{traces} of functions in $H^1(\Omega)$. 
Moreover,  for any $D\subset\overline{\Omega}$, we will use $(\cdot,\cdot)_D$ and $\|\cdot\|_D$ to denote the standard $L^2(D)$-inner product and inherited norm (respectively), for either scalar and vector-valued functions.

We consider a family of meshes $\{\CT_h\}_{h>0}$, each of them partitioning $ \Omega $ into non-overlapping simplexes $ K $, where $ h $ denotes the global mesh size defined as $ h\eq\max_{K\in\CT_h}h_K $. We assume that each mesh is conforming in the sense of \cite{ciarlet} and shape-regular\footnote{That is, there exists a constant $ \sigma>0 $, such that $ h_K/\rho_K\leq\sigma $ for all $ K\in\CT_h $ and for all $ h>0 $, where $ h_K $ denotes the diameter of $ K $, and $ \rho_K $ denotes the diameter of the largest ball inside $ K $.}. Let $ h_F $ be the diameter of a face/edge $ F $ of a simplex $ K $. We further denote by $ \CF_h^{\rm e} $ the set of exterior faces/edges lying on the boundary $ \partial\Omega $; by $ \CF_h^{\rm i} $ the remaining (interior) faces/edges; by $ \CF_h\eq\CF_h^{\rm i}\cup\CF_h^{\rm e} $ the set of all faces/edges; and for $ K\in\CT_h $, we define the sets $ \CF_K\eq\{F\in\CF_K:F\subset\partial K\} $, $ \CF_K^{\rm i}\eq\CF_K\cap\CF_h^{\rm i} $, and $ \CF_K^{\rm e}\eq\CF_K\cap\CF_h^{\rm e} $.

For a scalar-valued function $ w:\overline{\Omega}\to\mathbb{R} $, we define the jump of $ w $ across $ F\in\CF_h $ as
\begin{align*}
	\jmp{w}_F\eq\begin{cases}
		w^+|_F-w^-|_F&\text{if }F=\partial K^+\cap\partial K^-\quad\text{(with $ w^\pm\eq w|_{K^\pm} $)},\\
		w|_F&\text{if }F\in\CF_h^{\rm e},
	\end{cases}
\end{align*}
where $ K^+ $ and $ K^- $ are such that the normal to $ F $ points from $ K^+ $ to $ K^- $. We will omit the subscript $ F $ in $ \jmp{\cdot}_F $ if the jump is inside an $ L^2(F) $ norm.

For each $ K\in\CT_h $, consider the following subspace of zero-mean functions in $ H^1(K) $
\begin{equation}\label{zeromeanH1}
	H_*^1(K)\eq\left\{v_K\in L^2(K): \grad v_K \in \left(L^2(K)\right)^d \hbox{ and } (v_K,1)_K=0\right\}.
\end{equation}
The space \eqref{zeromeanH1} is endowed with the norm $\|\grad(\cdot)\|_K$ and inner-product $(\grad u_K,\grad v_K\big)_K$, for all $u_K,v_K\in H_*^1(K)$. Accordingly, we consider the following dual norm for the corresponding dual space 
$(H_*^1(K))'$ of bounded linear functionals over $H_*^1(K)$
$$
\|\cdot\|_{(H_*^1(K))'}\eq\sup_{v_K\in H_*^1(K)}{\left<\,\cdot\,,v_K\right>\over \|\grad v_K\|_K}\,,
$$ 
where $\left<\,\cdot\,,\,\cdot\,\right>$ denotes the associated duality pairing.

For the broken space $ H^1(\CT_h)\eq\{w\in L^2(\Omega):w|_K\in H^1(K)\quad\forall K\in\CT_h\} $, we introduce the global $ H^1(\CT_h) $-seminorm $$ \|\grad(\cdot)\|_{\CT_h}^2\eq\sum_{K\in\CT_h}\|\grad(\cdot)\|_K^2\,, $$
and the norm $ \|\cdot\|_{1,h}^2\eq\sum_{K\in\CT_h}|\cdot|_{1,K,h}^2 $, where $$ |\cdot|_{1,K,h}^2\eq\|\grad(\cdot)\|_K^2+\frac{1}{2}\sum_{F\in\CF_K^{\rm i}}h_F^{-1}\big\|\!\jmp{\cdot}\big\|_{F}^2+\sum_{F\in\CF_K^{\rm e}}h_F^{-1}\big\|\cdot\big\|_{F}^2. $$

For $ \bp\in\BCH $ such that $\bp\cdot\bn\in L^2(\partial K)$ (for all $ K\in\CT_h $), we consider the norm
$$\|\bp\|_{0,h}^2\eq\sum_{K\in\CT_h}\|\bp\|_{0,K,h}^2,\qquad\text{where }
\|\bp\|_{0,K,h}^2\eq\|\bp\|_K^2+h_K\|\bp\cdot\bn\|_{\partial K}^2.
$$

At the discrete level, for a given integer $r \geq 0$, let $\CV_K^r$ be the space of polynomials functions over $K$ of total degree no greater than $r$, and let $\BCV_K^r\eq\left(\CV_K^r\right)^d$ denote its vector-valued version. We will also consider the zero-mean space $ \CV_{*,K}^r \eq \CV_K^r\cap H_*^1(K)$, together with the global spaces
\begin{align}\label{eq:L2norm_extended}
	\CV_h^r&\eq\{v_h\in L^2(\Omega):v_h|_K\in\CV_K^r\quad\forall K\in\CT_h\} \,,\\
	\CV_{*,h}^r&\eq\{v_h\in L^2(\Omega):v_h|_K\in\CV_{*,K}^r\quad\forall K\in\CT_h\} \, .
\end{align}
Moreover, let $\BCH_h^r \eq \BCV_h^r\cap\BCH$, where $ \BCV_h^r\eq\{\bv_h\in\BCV:\bv_h|_K\in\BCV_K^r\quad\forall K\in\CT_h\} $.

Additionally, for $ \bp\in\BCH $ such that $\bp\cdot\bn\in L^2(\partial K)$ (for all $ K\in\CT_h $), we will also consider the norm
$$\|\bp\|_{*,h}^2\eq\sum_{K\in\CT_h}\|\bp\|_{*,K,h}^2,\qquad\text{where }
\|\bp\|_{*,K,h}^2\eq\|\bp\|_{*,K}^2+h_K\|\bp\cdot\bn\|_{\partial K}^2,
$$
where $ \|\cdot\|_{*,K} $ is a discrete dual norm defined by
\begin{align}\label{dual_discreta}
	\|\cdot\|_{*,K}\eq\sup_{v_K\in\CV_{*,K}^{p+2}}\frac{(\ \cdot\ ,\grad v_K)_K}{\|\grad v_K\|_K}.
\end{align}

For each $ K\in\CT_h $, we denote by $ Q_K^r:L^2(K)\to\CV_K^r $ the $ L^2 $-projection from $ L^2(K) $ onto $ \CV_K^r $, and $ Q_h^r:L^2(\Omega)\to \CV_h^r $ its global counterpart, namely, $ (Q_h^r v)|_K=Q_K^r(v|_K) $. In particular, we set $ Q_h\eq Q_h^0 $ and $ Q_K\eq Q_K^0 $ for each $ K\in\CT_h $, and we recall that the action of the operator $ I-Q_K $ provides zero-mean functions on $ K $.	

In the remainder of this document, if $ A, B \geq 0 $, we employ the notation $ A\lesssim B $ if there exists a positive constant $ C $, independent of $ h $, such that $ A \leq CB $.

\subsection{Model problem}

To fix ideas, given $f\in\CV$ and $ u_D\in H^{1/2}(\partial\Omega)$, we want to approximate the solution $u\in H^1(\Omega)$ and the flux $\bq\eq-\grad u\in \BCV$ of the model problem
\begin{subequations}\label{eq:model_problem}
	\begin{align}
		-\Delta u&=f\ \ \quad\text{in }\Omega,\\
		u&=u_D\quad\text{on }\partial\Omega.
	\end{align}
\end{subequations}

In order to approximate $u$ and $\bq$ as primal variables, we write our model problem as the system
\begin{subequations}\label{eq:model_system}
	\begin{align}\label{eq:constitutive}
		\bq+\grad u&={\bf 0}\ \ \quad\text{in }\Omega,\\
		\div\bq&=f\ \ \quad\text{in }\Omega,\\
		u&=u_D\quad\text{on }\partial\Omega\,,
	\end{align}
\end{subequations}
which admits the following variational formulation: 
\begin{subequations}\label{eq:continuous_formulation}
	\begin{align}
		\hbox{Find $ (u,\bq)\in\CV\times\BCH $ such that}\notag\\
		(\bq,\bp)_{\CT_h}-(\div\bp,u)_{\CT_h} = & -\langle u_D,\bp\cdot\bn\rangle_{\partial\Omega}
		&&\hspace{-1.5cm} \forall\bp\in\BCH,\label{eq:continuous_formulation_a}\\
		(\div\bq,v)_{\CT_h}= & \,(f,v)_{\CT_h}
		&&\hspace{-1.5cm}\forall v\in\CV.\label{eq:cont_form_div}
	\end{align}
\end{subequations}

Formulation~\eqref{eq:continuous_formulation} is well-posed by means of Babu\v{s}ka-Brezzi theory (see, e.g.,~\cite[Theorem~49.3]{ern_guermond_2021b}).

\subsection{A mixed FEM}

As an illustrative example of a mixed FEM discretizing~\eqref{eq:continuous_formulation}, we present the $d$-dimensional Brezzi-Douglas-Marini (BDM) finite element family~\cite{BDDF,BDM}, together with its a priori error estimates. The well-posedness of this discrete formulation relies on standard Babu\v{s}ka-Brezzi theory (see, e.g.,~\cite[Section~50.1.1]{ern_guermond_2021b}).

For a fixed integer $p\ge1$, we consider the following conforming and fully-discrete version of~\eqref{eq:continuous_formulation}: 
\begin{subequations}\label{eq:discrete_formulation}
	\begin{align}
		\qquad\hbox{Find $ (u_h,\bq_h)\in\CV_h^{p-1}\times\BCH_h^p $ such that}\notag\\
		(\bq_h,\bp_h)_{\CT_h}-(\div\bp_h,u_h)_{\CT_h}= & -\langle u_D,\bp_h\cdot\bn\rangle_{\partial\Omega}
		&&\forall\bp_h\in\BCH_h^p\subset\BCH,\\
		(\div\bq_h,v_h)_{\CT_h}= &\, (f,v_h)_{\CT_h}
		&&\forall v_h\in\CV_h^{p-1}\subset\CV.\label{eq:disc_form_div}
	\end{align}
\end{subequations}

Now, we recall the a priori error estimates for~\eqref{eq:discrete_formulation}; cf.~\cite[Theorem~2.1]{stenberg_1991}.

\begin{theorem}\label{estimates_mixed}
	Let $ (u,\bq)\in\CV\times\BCH $ be the solution of \eqref{eq:continuous_formulation} and $ (u_h,\bq_h)\in\CV_h^{p-1}\times\BCH_h^p $ be the solution of \eqref{eq:discrete_formulation}. Let us suppose that $ u $ lies in $ H^s(\Omega) $ with $ s>3/2$. Then, we have
	\begin{subequations}
		\begin{alignat}{2}
			\|\bq-\bq_h\|_{0,h} & \lesssim h^r|\bq|_{s,\Omega}\,, &&\quad \hbox{with } r=\min\{s-1,p+1\},\label{apriori_q}\\
			\|u-u_h\|_{\Omega} & \lesssim h^l(|\bq|_{l,\Omega}+|u|_{l,\Omega})\,, &&  \quad\hbox{with } l=\min\{s-1,p\},
		\end{alignat}
	\end{subequations}
	where $ |\cdot|_{s,\Omega} $ is the standard $ H^s(\Omega) $-seminorm \cite{adams_fournier_2003a}.	Moreover, if $ \Omega $ is convex, then we have 
	\begin{alignat}{2}
		\|u-u_h\|_{\Omega}&\lesssim h^l(|\bq|_{l-1,\Omega}+|u|_{l,\Omega}),\quad with\  l=\min\{s,p\},
		\intertext{and}
		\|u_h-Q_h^{p-1}u\|_{\Omega}&\lesssim\label{error_estimate}
		\begin{cases}
			h^{r+1}|\bq|_{r,\Omega},\quad with\  r=\min\{s-1,p+1\}&\quad\text{for }p\geq 2,\\
			h^2|\bq|_{2,\Omega}&\quad\text{for }p=1.
		\end{cases}
	\end{alignat}
\end{theorem}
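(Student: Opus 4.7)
\medskip

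\noindent\textbf{Proof plan.} The statement is a classical collection of error bounds for the BDM mixed scheme and I would follow the lines of the Babu\v{s}ka--Brezzi framework together with duality, as in \cite{BDM,stenberg_1991}. The starting point is the discrete inf-sup condition for the pair $\BCH_h^p\times\CV_h^{p-1}$, which together with the ellipticity of $(\cdot,\cdot)_{\CT_h}$ on $\ker(\div;\BCH_h^p)$ yields well-posedness and Galerkin orthogonality
\begin{align*}
(\bq-\bq_h,\bp_h)_{\CT_h}-(\div\bp_h,u-u_h)_{\CT_h}=0,\qquad (\div(\bq-\bq_h),v_h)_{\CT_h}=0,
\end{align*}
for all $(v_h,\bp_h)\in\CV_h^{p-1}\times\BCH_h^p$. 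I would then introduce the canonical BDM interpolant $\BLI_h^{\textrm{BDM}}:\BCH\cap\BL^t(\Omega)\to\BCH_h^p$ (for some $t>2$ to make traces meaningful) and use the commuting diagram $\div\,\BLI_h^{\textrm{BDM}}\bp=Q_h^{p-1}\div\bp$, whose element-wise approximation properties I assume as standard.

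For \eqref{apriori_q} I would exploit the commuting property to write $\div(\bq-\bq_h)=(\TI-Q_h^{p-1})f$, which makes $\bq_h-\BLI_h^{\textrm{BDM}}\bq$ divergence-free on each element, and then invoke the ellipticity on the kernel to obtain the quasi-optimality bound $\|\bq-\bq_h\|_{\Omega}\lesssim\|\bq-\BLI_h^{\textrm{BDM}}\bq\|_{\Omega}$. Combining with the BDM approximation estimate $\|\bq-\BLI_h^{\textrm{BDM}}\bq\|_{\Omega}\lesssim h^r|\bq|_{s,\Omega}$ gives the volumetric part. The surface contribution $\sum_K h_K\|(\bq-\bq_h)\cdot\bn\|_{\partial K}^2$ is handled by adding and subtracting $\BLI_h^{\textrm{BDM}}\bq$, applying a discrete trace inequality to $\bq_h-\BLI_h^{\textrm{BDM}}\bq$ (a polynomial on each $K$), and using the direct face approximation estimate of $\BLI_h^{\textrm{BDM}}$ on the other piece.

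For the $L^2$-estimate on $u-u_h$, I would use the inf-sup stability of the continuous mixed problem in the form $\|u-u_h\|_{\Omega}\lesssim\sup_{\bp\in\BCH}\big|(\bq-\bq_h,\bp)_{\CT_h}-(\div\bp,u-u_h)_{\CT_h}\big|/\|\bp\|_{\BCH}$ and test with $\BLI_h^{\textrm{BDM}}\bp$ to activate Galerkin orthogonality, reducing matters to the already controlled $\|\bq-\bq_h\|_{\Omega}$ and approximation of $\bp$. Under convexity I would sharpen this by a standard Aubin--Nitsche trick: solving the auxiliary Poisson problem $-\Delta\phi=u-u_h$ with homogeneous Dirichlet data, which enjoys full $H^2$ regularity $\|\phi\|_{2,\Omega}\lesssim\|u-u_h\|_{\Omega}$; the gain of one power of $h$ then comes from the extra regularity of the dual flux $-\grad\phi$.

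The last and most delicate estimate, \eqref{error_estimate}, is the superconvergent bound on $u_h-Q_h^{p-1}u$; here the main obstacle is to set up the duality so that \emph{two} powers of $h$ are gained simultaneously from the primal and the dual sides. I would pick the dual problem $-\Delta\phi=u_h-Q_h^{p-1}u$ (or rather with right-hand side that duals $u_h-Q_h^{p-1}u$) with $\phi=0$ on $\partial\Omega$, which on a convex domain satisfies $\|\phi\|_{2,\Omega}\lesssim\|u_h-Q_h^{p-1}u\|_{\Omega}$, and represent $\|u_h-Q_h^{p-1}u\|_{\Omega}^2=(u_h-u,-\Delta\phi)_{\Omega}+(u-Q_h^{p-1}u,-\Delta\phi)_{\Omega}$. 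The second term vanishes by definition of $Q_h^{p-1}$ against $Q_h^{p-1}(-\Delta\phi)$, leaving a residual that is controlled by $h\|-\Delta\phi\|_{\Omega}$ times $\|\grad(u-u_h)\|$-type quantities. For the first term I would test the mixed equations against $\BLI_h^{\textrm{BDM}}(-\grad\phi)$, use Galerkin orthogonality, the commuting property, and the BDM approximation estimate for $\bq=-\grad u$ of order $h^{r}$, yielding the product $h\cdot h^r$ claimed in the statement. The degenerate case $p=1$ requires a separate bookkeeping because then $l=1$ and the $\bq$-regularity we can exploit in the duality saturates at $|\bq|_{2,\Omega}$, producing the reported $h^2$ rate. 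I expect the careful identification of orthogonalities that justify the two-fold gain in $h$ to be the most technical part of the argument.
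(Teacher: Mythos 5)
The paper does not actually prove this theorem: it is recalled verbatim from Stenberg's 1991 paper (Theorem~2.1 there, via \cite{stenberg_1991}) and used as a black box, so there is no in-paper argument to compare against. Your outline reproduces the standard proof from that literature --- the commuting BDM interpolant and quasi-optimality on the divergence-free part of the flux error for \eqref{apriori_q}, inf-sup stability and Aubin--Nitsche duality under convexity for $u-u_h$, and a second duality argument gaining two powers of $h$ for $u_h-Q_h^{p-1}u$ (with the $p=1$ degeneration traced to the data-oscillation term $f-Q_h^{p-1}f$) --- which is precisely the route taken in the cited source.
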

If $ f\in\CV_h^{p-1} $, then the first estimate in \eqref{error_estimate} also holds for $ p=1 $.
\begin{remark}
	Since $ \|\bq-\bq_h\|_{*,K}\leq\|\bq-\bq_h\|_K $, for all $ K\in\CT_h $, we have the same a priori error estimate \eqref{apriori_q} for $ \|\bq-\bq_h\|_{*,h} $\,.
\end{remark}

\subsection{A superconvergent postprocessing}
\!Let $ (u_h,\bq_h) $ be the solution of~\eqref{eq:discrete_formulation}. We present a slight modification of the postprocessing scheme introduced by Stenberg in~\cite{stenberg_1991}. We look for a discrete solution $ \widetilde{u}_{h} \in\CV_{h}^{p+1} $ satisfying the following two equations on each element $ K\in\CT_h$:
\begin{subequations}\label{eq:postproc}
	\begin{align}
		\label{eq:postproc_a}
		(\nabla\widetilde{u}_{h},\nabla v_K)_K & = -(\bq_h,\grad v_K)_K
		&&\hspace{-2cm}\forall \, v_K\in\CV_{*,K}^{p+1}\,,\\
		\label{eq:postproc_b}
		(\widetilde{u}_{h},1)_K  & = (u_h,1)_K\,. &&
	\end{align}
\end{subequations}

\begin{remark}
	Since $ \widetilde{u}_h $ is computed locally, the computational effort required to obtain $ \widetilde{u}_h $ is negligible compared to the computational effort needed to solve the global problem~\eqref{eq:discrete_formulation}.
\end{remark}

For the postprocessed solution $ \widetilde{u}_h $, the next Theorem shows an a priori error estimate (cf.~\cite[Theorem 2.2]{stenberg_1991}). We recall its proof for the sake of completeness and to emphasize its dependence on a priori error estimates associated with the mixed method \eqref{eq:discrete_formulation}.

\begin{theorem}\label{thm:post_apriori}
	Let $ (u,\bq)\in\CV\times\BCH $ be the solution of \eqref{eq:continuous_formulation} and $ \widetilde{u}_h $ be defined by \eqref{eq:postproc}. Then, we have
	\begin{align}\label{apriori_pp}
		\|u-\widetilde{u}_h\|_K
		&\leq
		\frac{h_K}{\pi}\!\left(\|\bq-\bq_h\|_K+\left\|\grad\big(u-Q_K^{p+1}u|_K\big)\right\|_K\,\right)\\
		&\quad\ +\left\|u_h-Q_K^{p-1}u|_K\,\right\|_K
		+\left\|u-Q_K^{p+1}u|_K\,\right\|_K,\nonumber
	\end{align}
	for all $ K\in\CT_h $.
\end{theorem}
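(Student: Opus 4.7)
The plan is to start with a triangle inequality that isolates the $L^2$-projection error $\|u-Q_K^{p+1}u\|_K$ (which appears as the last term in the bound) and then analyze the discrete difference $e_K := Q_K^{p+1}u|_K - \widetilde{u}_h$, which lives in $\CV_K^{p+1}$. The natural move is to decompose $e_K = \bar e_K + e_K^*$ into its constant (mean) part $\bar e_K \in \CV_K^0$ and its zero-mean part $e_K^* \in \CV_{*,K}^{p+1}$, and bound each separately. This decomposition is exactly tailored to the two defining equations \eqref{eq:postproc_a} and \eqref{eq:postproc_b} of the postprocessing.

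For the constant part $\bar e_K = |K|^{-1}(e_K,1)_K$, I would use \eqref{eq:postproc_b} together with the fact that the $L^2$-projection preserves integrals against constants: since $(Q_K^{p+1}u,1)_K = (u,1)_K = (Q_K^{p-1}u,1)_K$ and $(\widetilde{u}_h,1)_K = (u_h,1)_K$, one obtains $(e_K,1)_K = (Q_K^{p-1}u|_K - u_h,1)_K$. A single Cauchy-Schwarz then gives $\|\bar e_K\|_K \leq \|u_h - Q_K^{p-1}u|_K\|_K$, which accounts for the third term in the estimate.

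For the zero-mean part $e_K^* \in \CV_{*,K}^{p+1}$, I would test \eqref{eq:postproc_a} against $e_K^*$ and combine with the continuous relation $\bq = -\grad u$ (tested against $e_K^*$ as well, which is legitimate since $e_K^* \in H_*^1(K)$). Writing
\begin{align*}
\|\grad e_K^*\|_K^2 &= (\grad(Q_K^{p+1}u - \widetilde{u}_h),\grad e_K^*)_K \\
&= (\grad(Q_K^{p+1}u - u),\grad e_K^*)_K + (\grad u + \bq_h,\grad e_K^*)_K \\
&= (\grad(Q_K^{p+1}u|_K - u),\grad e_K^*)_K + (\bq_h - \bq,\grad e_K^*)_K,
\end{align*}
and applying Cauchy-Schwarz yields $\|\grad e_K^*\|_K \leq \|\grad(u - Q_K^{p+1}u|_K)\|_K + \|\bq - \bq_h\|_K$. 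The Payne-Weinberger/Bebendorf Poincaré inequality on convex simplices, $\|w\|_K \leq (h_K/\pi)\|\grad w\|_K$ for $w\in H_*^1(K)$, then supplies the sharp constant $h_K/\pi$ in front, producing the first two terms of the bound. Assembling via $\|u - \widetilde{u}_h\|_K \leq \|u - Q_K^{p+1}u|_K\|_K + \|\bar e_K\|_K + \|e_K^*\|_K$ finishes the proof.

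The only delicate point is the bookkeeping in the zero-mean step: one must verify that replacing $\grad \widetilde{u}_h$ by $-\bq_h$ is valid in the $(\grad\cdot,\grad e_K^*)_K$ pairing, which is precisely where the choice $e_K^* \in \CV_{*,K}^{p+1}$ (a valid test function in \eqref{eq:postproc_a}) is essential. Everything else is standard Cauchy-Schwarz together with the fact that $L^2$-projections onto polynomial spaces containing the constants preserve element-wise means.
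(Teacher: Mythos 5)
Your proof is correct and follows essentially the same route as the paper: the splitting of $Q_K^{p+1}u|_K-\widetilde{u}_h$ into its mean part (controlled via \eqref{eq:postproc_b} and the mean-preservation of the $L^2$-projections) and its zero-mean part (controlled via \eqref{eq:postproc_a}, the relation $\bq=-\grad u$, Cauchy--Schwarz, and the Payne--Weinberger Poincar\'e inequality with constant $h_K/\pi$) is exactly the paper's decomposition $v|_K=(I-Q_K)(Q_K^{p+1}u|_K-\widetilde u_h|_K)$ plus $Q_K(Q_K^{p+1}u|_K-\widetilde u_h|_K)$. No gaps.
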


\begin{proof}
	Let us define $ v\in\CV_{*,h}^{p+1} $ through $ v|_K=(I-Q_K)\big(Q_K^{p+1}u|_K-\widetilde{u}_h|_K\big) $, for each $ K\in\CT_h $. Thanks to \eqref{eq:postproc_a}, and the fact that $ \grad u + \bq=0 $, we write
	\begin{align*}
		\|\grad v\|_K^2&=\left(\grad(I-Q_K)\big(Q_K^{p+1}u|_K-\widetilde{u}_h|_K\big),\grad v\right)_{\!K}\\
		&=\left(\grad\big(Q_K^{p+1}u|_K-\widetilde{u}_h\big),\grad v\right)_{\!K}\\
		&=\left(\grad Q_K^{p+1}u|_K,\grad v\right)_{\!K}+(\bq_h,\grad v)_K\\
		&=-\left(\grad\big(u-Q_K^{p+1}u|_K\big),\grad v\right)_{\!K}-(\bq-\bq_h,\grad v)_K\,.
	\end{align*}
	Thus, by using the Cauchy-Schwarz inequality, we obtain
	\begin{equation}\label{eq:bound_v}
		\|\grad v\|_K\leq\left\|\grad\big(u-Q_K^{p+1}u|_K\big)\right\|_K+\|\bq-\bq_h\|_K\,.
	\end{equation}
	Since $ v|_K\in\CV_{*,K}^{p+1} $, thanks to \eqref{eq:bound_v} and Poincar\'e inequality \cite[Eq.~(4.3)]{payne_weinberger_1960}, we have
	\begin{align}\label{eq:bound_v_final}
		\|v\|_K&\leq\frac{h_K}{\pi}\|\grad v\|_K
		\leq\frac{h_K}{\pi}\!\left(\left\|\grad\big(u-Q_K^{p+1}u|_K\big)\right\|_K
		+\|\bq-\bq_h\|_K\right).
	\end{align}
	On another hand, observe that
	\begin{align*}
		\int_K Q_K Q_K^{p+1}u|_K=\int_K Q_K^{p+1}u|_K=\int_K u=\int_K Q_K^{p-1}u|_K=\int_K Q_K Q_K^{p-1}u|_K\,.
	\end{align*}
	Therefore, we deduce that $ Q_K Q_K^{p+1}u|_K=Q_K Q_K^{p-1}u|_K $. Moreover, recalling \eqref{eq:postproc_b} and the boundedness of $ Q_K $, we conclude that
	\begin{align}\label{eq:proj_term}
		\big\|Q_K\big(Q_K^{p+1}u|_K-\widetilde{u}_h|_K\big)\!\big\|_K\!=\!\big\|Q_K\big(Q_K^{p-1}u|_K-u_h|_K\big)\!\big\|_K\!\leq\!\big\|Q_K^{p-1}u|_K-u_h\big\|_K\,.
	\end{align}
	Hence, \eqref{apriori_pp} follows from \eqref{eq:bound_v_final}, \eqref{eq:proj_term}, and the splitting
	\begin{align*}
		(u-\widetilde{u}_h)|_K=v|_K+Q_K\big(Q_K^{p+1}u|_K-\widetilde{u}_h|_K\big)+\big(u|_K-Q_K^{p+1}u|_K\big),
	\end{align*}
	for all $ K\in\CT_h $.
\end{proof}

The extension of this analysis to the other mixed methods of \cite{BDDF,BDM,nedelec,RT,stenberg_2010} is straightforward.

\begin{remark}{(Optimal convergence})\label{thm:apriori_aux}
	Thanks to the approximation properties of $ Q_h^{p+1} $ and the estimates of Theorems \ref{estimates_mixed} and \ref{thm:post_apriori}, if $ u\in H^s(\Omega) $, with $ s>3/2 $, and $ \Omega $ is convex, then we have
	\begin{equation}\label{apriori_minres_aux}
		\|u-\widetilde{u}_h\|_{\CT_h}\lesssim
		\begin{cases}
			h^{r+1}(|\bq|_{r,\Omega}+|u|_{r+1,\Omega}),\quad r=\min\{s-1,p+1\},&\text{ for }p\geq 2,\\
			h^2(|\bq|_{2,\Omega}+|u|_{2,\Omega}),&\text{ for }p=1.
		\end{cases}
	\end{equation}
	Moreover, if $ f\in\CV_h^{p-1} $, then the first estimate in \eqref{apriori_minres_aux} also holds for $ p=1 $.
\end{remark}

\section{Main results}\label{sec:main_results}

In this section we introduce our postprocessing scheme and state its relation with the postprocessing scheme proposed by Stenberg~\cite{stenberg_1991}. Next, we present our main results, concerning a posteriori error estimates.

\subsection{Postprocessing based on local residual minimization}

We present the main idea behind our postprocessing procedure, which considers the local postprocessing scheme \eqref{eq:postproc} as a starting point.

In our case, by keeping the same approximation space $\CV_h^{p+1}$, we will follow a residual minimization approach. 
Consequently, the new postprocessed variable $\nu_h\in \CV_h^{p+1}$ will be defined locally through the following set of local residual minimization problems:
\begin{subequations}\label{eq:resmin}
	\begin{align}
		\notag  &\hspace{-1.75cm}\text{Find $\nu_h\in \CV_h^{p+1}$  such that, for each $K\in\CT_h$,}\\
		\label{eq:resmin_a}
		& \displaystyle\nu_{h}|_K=\argmin_{w_K\in \CV_K^{p+1}}\frac{1}{2}\|\bq_h+\grad w_K\|_{*,K}^2
		\,, &&\\
		( \nu_h&,1)_K = (u_h,1)_K\,,&&
	\end{align}
\end{subequations}
where $ \|\cdot\|_{*,K} $ is the discret dual norm defined in~\eqref{dual_discreta}.

For each $K\in\CT_h$, let $ \nu_K\eq\nu_h|_K $. We emphasize that solving \eqref{eq:resmin} is equivalent to solve the local mixed linear system (see \cite{MR2916380}):
\begin{subequations}\label{eq:saddlepoint}
	\begin{align}
		\notag	\qquad\text{Find $ (\varepsilon_K,\nu_{K})\in \CV_{*,K}^{p+2}\times \CV_K^{p+1} $ such that}\\
		(\grad\varepsilon_K,\grad v_K)_K+(\grad\nu_{K},\grad v_K)_K= & \, -(\bq_h,\grad v_K)_K
		&& 
		\qquad\forall v_K\in \CV_{*,K}^{p+2},\label{eq:saddlepoint_a}\\
		(\grad w_K,\grad\varepsilon_K)_K\hspace{2.625cm}= & \, 0
		&& \qquad\forall w_K\!\in \CV_{*,K}^{p+1}\,,\label{eq:saddlepoint_b}\\
		( \nu_h,1)_K = & \, (u_h,1)_K.&&
	\end{align}
\end{subequations}

We also introduce $ \varepsilon_h\in\CV_{*,h}^{p+2} $ as the global version of $ \varepsilon_K $, that is, $ \varepsilon_h|_K\eq\varepsilon_K $ for each $ K\in\CT_h $.
Next, we establish how our postprocessed solution $ \nu_h $ relates to the Stenberg postprocess. In particular, both postprocessed solutions are the same. However, they have a different nature: while the first is constructed through a residual minimization approach, the second one solves directly an elliptic problem.

\begin{proposition}\label{prop:}
	Let $\widetilde u_h\in \CV_h^{p+1}$ be the solution of the postprocessing problem~\eqref{eq:postproc}, and let $\nu_h\in \CV_h^{p+1}$ be the solution of the residual minimization postprocessing~\eqref{eq:resmin}. Then, $ \nu_h = \widetilde u_h $.
	That is, both postprocessed solutions are the same.
\end{proposition}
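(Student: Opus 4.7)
The plan is to show that the solution pair $(\varepsilon_K,\nu_K)$ of the saddle-point system \eqref{eq:saddlepoint} automatically forces $\nu_K$ to satisfy the Stenberg equations \eqref{eq:postproc}, after which uniqueness of the latter yields the claim.

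First I would exploit the containment $\CV_{*,K}^{p+1}\subset \CV_{*,K}^{p+2}$ and restrict the test functions in \eqref{eq:saddlepoint_a} to $v_K\in \CV_{*,K}^{p+1}$. This is legitimate because every $v_K\in \CV_{*,K}^{p+1}$ is an admissible test function in the top equation of the saddle-point system. Next I would invoke the second saddle-point equation \eqref{eq:saddlepoint_b}: taking $w_K=v_K\in \CV_{*,K}^{p+1}$ gives $(\nabla v_K,\nabla\varepsilon_K)_K=0$, so the $\varepsilon_K$ contribution in \eqref{eq:saddlepoint_a} vanishes whenever the test function sits in $\CV_{*,K}^{p+1}$. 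What remains is exactly
\begin{equation*}
(\nabla\nu_K,\nabla v_K)_K=-(\bq_h,\nabla v_K)_K\qquad\forall v_K\in \CV_{*,K}^{p+1},
\end{equation*}
which is the Stenberg variational equation \eqref{eq:postproc_a}. The mean constraint $(\nu_h,1)_K=(u_h,1)_K$ in \eqref{eq:saddlepoint} is identical to \eqref{eq:postproc_b}.

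Finally, I would invoke the well-posedness of the Stenberg postprocessing problem: \eqref{eq:postproc_a} determines $\widetilde u_h|_K$ up to an additive constant (since the bilinear form $(\nabla\cdot,\nabla\cdot)_K$ is coercive on $H_*^1(K)$ by Poincar\'e), and \eqref{eq:postproc_b} fixes that constant. Thus $\widetilde u_h|_K$ is uniquely characterized by \eqref{eq:postproc}, and since $\nu_K$ satisfies the same equations, we conclude $\nu_K=\widetilde u_h|_K$ on every $K\in\CT_h$, hence $\nu_h=\widetilde u_h$.

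There is no real obstacle here; the only subtle point is checking that one is allowed to use $\CV_{*,K}^{p+1}$-test functions in \eqref{eq:saddlepoint_a}, and that the orthogonality built into \eqref{eq:saddlepoint_b} is exactly the right one to kill the error-representative term against such tests. Essentially, $\varepsilon_K$ plays the role of the residual representative whose action vanishes on the Stenberg test space $\CV_{*,K}^{p+1}$, which is why the minimization rewrites cleanly as the classical postprocessing elliptic problem.
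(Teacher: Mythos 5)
Your proposal is correct and follows essentially the same route as the paper: restrict the test functions in \eqref{eq:saddlepoint_a} to $\CV_{*,K}^{p+1}\subset\CV_{*,K}^{p+2}$, use \eqref{eq:saddlepoint_b} to annihilate the $\varepsilon_K$ term, recognize the resulting equations as \eqref{eq:postproc}, and conclude by uniqueness. The only addition is your explicit justification of that uniqueness via coercivity on $H_*^1(K)$, which the paper takes for granted.
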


\begin{proof}
	Since $ \CV_{*,K}^{p+1}\subset\CV_{*,K}^{p+2} $, for each $ K\in\CT_h $, we can test \eqref{eq:saddlepoint_a} with $ v_K\in\CV_{*,K}^{p+1} $. Moreover, thanks to \eqref{eq:saddlepoint_b}, the first term in \eqref{eq:saddlepoint_a} vanishes, so we have that $ \nu_K $ solves \eqref{eq:postproc}. By uniqueness of the solution of \eqref{eq:postproc}, we conclude that $ \nu_h = \widetilde u_h $.
\end{proof}

Given the fact that both postprocessed solutions are equal, hereafter we denote them uniquely by $\nu_h$.

\begin{corollary}[a priori error analysis]\label{corol:apriori_pp}
	The postprocessed solution $\nu_h\in~\CV_h^{p+1}$ also satisfies the a priori error estimate~\eqref{apriori_pp} given in Theorem~\ref{thm:post_apriori}.
\end{corollary}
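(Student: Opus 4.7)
The plan is essentially a one-line argument: invoke Proposition~\ref{prop:} and then quote Theorem~\ref{thm:post_apriori}. Since Proposition~\ref{prop:} already establishes the pointwise (in the sense of coefficient-by-coefficient) identity $\nu_h = \widetilde{u}_h$ on every element $K\in\CT_h$, the estimate~\eqref{apriori_pp} transfers verbatim from $\widetilde{u}_h$ to $\nu_h$.

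More concretely, I would first recall that by Proposition~\ref{prop:} the unique solution $\nu_h\in\CV_h^{p+1}$ of the local residual minimization problem~\eqref{eq:resmin} coincides with the unique solution $\widetilde{u}_h\in\CV_h^{p+1}$ of Stenberg's postprocessing~\eqref{eq:postproc}. Next, applying Theorem~\ref{thm:post_apriori} directly to $\widetilde{u}_h=\nu_h$ on each $K\in\CT_h$ gives
\begin{align*}
\|u-\nu_h\|_K
&\leq\frac{h_K}{\pi}\!\left(\|\bq-\bq_h\|_K+\left\|\grad\big(u-Q_K^{p+1}u|_K\big)\right\|_K\right)\\
&\quad+\left\|u_h-Q_K^{p-1}u|_K\right\|_K+\left\|u-Q_K^{p+1}u|_K\right\|_K,
\end{align*}
which is exactly~\eqref{apriori_pp} with $\widetilde{u}_h$ replaced by $\nu_h$.

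There is no genuine obstacle here: the content of the corollary is entirely absorbed into Proposition~\ref{prop:}, and no re-derivation of the bounding argument of Theorem~\ref{thm:post_apriori} is required. The only point worth emphasizing in the write-up is that the identity $\nu_h=\widetilde{u}_h$ is element-wise (both methods decouple into local problems on each $K\in\CT_h$), so the local estimate~\eqref{apriori_pp} carries over without any modification, and in particular all the global consequences collected in Remark~\ref{thm:apriori_aux} hold for $\nu_h$ as well.
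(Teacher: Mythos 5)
Your argument is correct and is exactly the one the paper intends: the corollary follows immediately by combining the identity $\nu_h=\widetilde{u}_h$ from Proposition~\ref{prop:} with the estimate~\eqref{apriori_pp} of Theorem~\ref{thm:post_apriori}. No further justification is needed.
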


\begin{remark}[General test spaces]
	We could have considered $\CV_{*,K}\subset H_*^1(K)$ as an arbitrary finite-dimensional test space containing $ \CV_{*,K}^{p+1} $ in \eqref{eq:saddlepoint_a}. In such a case, Proposition \ref{prop:} and Corollary \ref{corol:apriori_pp} still hold. However, we prefer $ \CV_{*,K}^{p+2} $ since higher-order polynomials will not increase the order of convergence.
	
	On the other hand, the local residual representative $\varepsilon_K\in(\CV_{*,K}^{p+1})^\perp\subset \CV_{*,K}$ is nothing but the Riesz representative of the local residual $-(\bq_h+\grad\nu_ h,\grad(\cdot))_K\in(\CV_{*,K})'$. 
	In general, $\varepsilon_K\neq 0$ whenever $\CV_{*,K}$ strictly\footnote{Notice that if $ \CV_{*,K}=\CV_{*,K}^{p+1} $, then $ \varepsilon_K=0 $, and therefore \eqref{eq:saddlepoint} is exactly the same as \eqref{eq:postproc}.} contains $\CV_{*,K}^{p+1}$. Hence, $\|\grad\varepsilon_K\|_K$ can be used as a local (a posteriori) error estimator to drive adaptivity.
\end{remark}

\subsection{A posteriori error analysis}
In this section, we introduce and analyze a posteriori error estimators based on the local residual representative $ \varepsilon_K $ provided by~\eqref{eq:saddlepoint}. In contrast to DPG methods, which directly use the (built-in) residual representative as an (a posteriori) error estimator, we further propose an improved error estimator using $ \varepsilon_K $ as a starting point (see \S\ref{sec:improved_estimator}).

\subsubsection{A built-in a posteriori error estimator}

In this section, we study the a posteriori error estimator
$
\widetilde{\eta}\eq\left(\sum_{K\in\CT_h}\widetilde{\eta}_K^2\right)^{1/2}$, with $\widetilde{\eta}_K\eq \|\grad\varepsilon_K\|_K\,,$
where $\varepsilon_K\in \CV_{*,K}^{p+2}$ is the first component of the solution of problem~\eqref{eq:saddlepoint}. We have realized that the associated reliability estimate allows to control the error $ \bq-\bq_h $, only in a norm weaker than the $ L^2(\Omega) $-norm. As in \cite{MR2240629}, full control of the $ L^2(\Omega) $-norm requires additional terms quantifying the mismatch associated with the discrete version of \eqref{eq:constitutive} (namely, $ \|\bq_h+\grad\nu_h\|_K $), together with interelement jumps of the postprocessed solution $ \nu_h $ (see \S\ref{sec:improved_estimator} for the details).

Before stating the reliability estimate, we want to highlight a detail regarding the norm that we are using to measure $ \bq-\bq_h $. In \cite{{stenberg_1991}}, the author needed to state an a priori error estimate for $ \bq $ in the mesh dependent norm $ \|\cdot\|_{0,h} $ given in \eqref{eq:L2norm_extended}. Even though we do not require the boundary part of $ \|\cdot\|_{0,h} $ to obtain the a priori estimate \eqref{apriori_minres_aux}, we can use the norm $ \|\cdot\|_{0,h} $ in the a posteriori error analysis to get a better measure of $ \bq-\bq_h $. However, the latter requires the following (Fortin) assumption:
\begin{assumption}[Fortin operator]\label{fortin_assumpt}
	Let $ \gamma_K:H^1(K)\to L^2(\partial K) $ denote the continuous Dirichlet trace operator. There exists an operator $ \Pi_{\partial K}:\gamma_K(H_*^1(K))\to \gamma_K(\CV_{*,K}^{p+2}) $ and a constant $ C_{\Pi}>0 $, independent of $ h $, such that the following conditions are satisfied:
	\begin{subequations}\label{Fortin}
		\begin{align}
			&\|\Pi_{\partial K} v_K\|_{\partial K}\leq C_{\Pi}\|v_K\|_{\partial K}\qquad
			\forall v_K\in\gamma_K(H_*^1(K)),\\
			&(\bp_h\cdot\bn,v_K-\Pi_{\partial K} v_K)_{\partial K}=0\quad\ \!\forall\bp_h\in\BCH_h^p,\ \forall v_K\in \gamma_K(H_*^1(K)).\label{Fortin_orthog}
		\end{align}
	\end{subequations}
\end{assumption}
\begin{remark}
	In Appendix \ref{appA}, we verify Assumption \ref{fortin_assumpt} for the case $ p=1 $.
\end{remark}
To state the last (saturation) assumption needed to prove the reliability estimate, we introduce the following auxiliary problem:
\begin{subequations}\label{eq:aux_problem}
	\begin{align}
		\notag	&\hspace{-2cm}\text{Find $ \theta_h\in\CV_h^{p+2} $ such that, for each $ K\in\CT_h $,}\\
		&(\grad\theta_h,\grad v_K)_K=  \, -(\bq_K,\grad v_K)_K
		\qquad\forall v_K\in \CV_{*,K}^{p+2},\\
		&\hspace{0.95cm}( \theta_h,1)_K =  \, (u_h,1)_K.
	\end{align}
\end{subequations}
Now, we introduce the last asssumption:
\begin{assumption}[Saturation]\label{eq:saturation}
	Let $ (\varepsilon_h,\nu_h)\in \CV_{*,h}^{p+2}\times \CV_h^{p+1} $ solve \eqref{eq:saddlepoint} and $ \theta_h\in \CV^{p+2}_{*,h} $ solve \eqref{eq:aux_problem}. There exists a real number $ \delta\in[0,1) $, uniform with respect to $ h $, such that $$ \|\grad\!\left(u-\theta_h\right)\!\|_{\CT_h}\leq\delta\|\grad\!\left(u-\nu_h\right)\!\|_{\CT_h}. $$
\end{assumption}

\begin{theorem}[Reliability based on the residual representative]\label{rel}
	Let $(u,\bq) \in \CV \times \BCH$ be the solution of the continuous problem~\eqref{eq:model_problem}; let $(u_h,\bq_h) \in \CV_h^{p-1} \times \BCH_h^p$ be the solution of the discrete problem~\eqref{eq:discrete_formulation}; and let $(\varepsilon_h,\nu_h)\in \CV_{*,h}^{p+2}\times \CV_h^{p+1} $ be the solution of \eqref{eq:saddlepoint}. 
	If Assumptions \ref{fortin_assumpt} and \ref{eq:saturation} are satisfied, then the following estimate holds:
	\begin{align}\label{eq:reliability}
		\|\grad(u-\nu_h)\|_{\CT_h}+\|\bq-\bq_h\|_{*,h}\lesssim\widetilde{\eta}+{\rm osc}(\bq),
	\end{align}
	where $ {\rm osc}(\bq)\eq\left(\sum_{K\in\CT_h}{\rm osc}_K(\bq)^2\right)^{1/2} $, and
	\begin{align}\label{osc}
		{\rm osc}_K(\bq)\eq
		h_K^{1/2}\sup_{v\in L^2(\partial K)\setminus\{0\}}\frac{\langle\bq\cdot\bn,v-\Pi_{\partial K}v\rangle_{\partial K}}{\|v\|_{\partial K}}.
	\end{align}
\end{theorem}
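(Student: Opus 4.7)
The plan is to combine the saturation hypothesis, the algebraic link between the saddle system~\eqref{eq:saddlepoint} and the auxiliary problem~\eqref{eq:aux_problem}, and the Fortin operator from Assumption~\ref{fortin_assumpt}. The aim is to bound each piece on the left-hand side of~\eqref{eq:reliability} by $\widetilde{\eta}+\mathrm{osc}(\bq)$ plus a fraction of the left-hand side itself, and then absorb.

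First I would apply the triangle inequality and Assumption~\ref{eq:saturation} to obtain $(1-\delta)\|\grad(u-\nu_h)\|_{\CT_h}\leq\|\grad(\theta_h-\nu_h)\|_{\CT_h}$. Next, subtracting~\eqref{eq:aux_problem} from~\eqref{eq:saddlepoint_a}, for every $v_K\in\CV_{*,K}^{p+2}$,
\begin{equation*}
(\grad(\theta_h-\varepsilon_h-\nu_h),\grad v_K)_K=(\bq_h-\bq,\grad v_K)_K.
\end{equation*}
Since $(\theta_h-\varepsilon_h-\nu_h)|_K\in\CV_{*,K}^{p+2}$ (it has zero mean on $K$), I can test with this function and invoke~\eqref{dual_discreta} to conclude $\|\grad(\theta_h-\varepsilon_h-\nu_h)\|_K\leq\|\bq-\bq_h\|_{*,K}$. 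Adding and subtracting $\varepsilon_h$ and summing over the mesh yields
\begin{equation*}
(1-\delta)\|\grad(u-\nu_h)\|_{\CT_h}\leq\widetilde{\eta}+\biggl(\sum_{K\in\CT_h}\|\bq-\bq_h\|_{*,K}^2\biggr)^{\!1/2}.
\end{equation*}

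The next and main step controls $\|\bq-\bq_h\|_{*,h}$. For the interior piece, combining $\bq=-\grad u$ with~\eqref{eq:saddlepoint_a} gives the identity
\begin{equation*}
(\bq-\bq_h,\grad v_K)_K=(\grad(\nu_K+\varepsilon_K-u),\grad v_K)_K,\qquad v_K\in\CV_{*,K}^{p+2},
\end{equation*}
from which $\|\bq-\bq_h\|_{*,K}\leq\widetilde{\eta}_K+\|\grad(u-\nu_h)\|_K$. For the boundary piece $h_K^{1/2}\|(\bq-\bq_h)\cdot\bn\|_{\partial K}$, I would test against an arbitrary $v\in L^2(\partial K)$, insert $\pm\Pi_{\partial K}v$, and exploit the Fortin orthogonality~\eqref{Fortin_orthog} for $\bq_h\in\BCH_h^p$ to write
\begin{equation*}
\langle(\bq-\bq_h)\cdot\bn,v\rangle_{\partial K}=\langle\bq\cdot\bn,v-\Pi_{\partial K}v\rangle_{\partial K}+\langle(\bq-\bq_h)\cdot\bn,\Pi_{\partial K}v\rangle_{\partial K}.
\end{equation*}
Taking the supremum over $v$, the first term recovers $\mathrm{osc}_K(\bq)$ by definition~\eqref{osc}. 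For the second, I would lift $\Pi_{\partial K}v$ to some $w\in\CV_{*,K}^{p+2}$ with $\|\grad w\|_K\lesssim h_K^{-1/2}\|v\|_{\partial K}$ (using the Fortin stability~\eqref{Fortin} together with a scaled trace inequality), integrate by parts, and invoke the interior identity above together with $\div\bq_h=Q_K^{p-1}f$. These are precisely the auxiliary estimates deferred to Appendix~\ref{appA}.

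Putting everything together one arrives at an inequality of the form
\begin{equation*}
\|\grad(u-\nu_h)\|_{\CT_h}+\|\bq-\bq_h\|_{*,h}\lesssim\widetilde{\eta}+\mathrm{osc}(\bq)+\kappa\bigl(\|\grad(u-\nu_h)\|_{\CT_h}+\|\bq-\bq_h\|_{*,h}\bigr),
\end{equation*}
where a suitable Young-type weighting combined with the saturation factor $1-\delta$ produces $\kappa<1$, so the right-hand error absorbs on the left to give~\eqref{eq:reliability}. The principal obstacle is the boundary analysis: constructing the stable $H^1$-lift of $\Pi_{\partial K}v$ with the correct $h_K^{-1/2}$ scaling and obtaining constants compatible with the final absorption step is the delicate ingredient, and is precisely where the saturation hypothesis and the Fortin operator combine.
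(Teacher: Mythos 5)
Your decomposition of the flux error---the dual-norm bound $\|\bq-\bq_h\|_{*,K}\leq\widetilde{\eta}_K+\|\grad(u-\nu_h)\|_K$ and the treatment of the normal trace by inserting $\pm\Pi_{\partial K}v$ and invoking the Fortin properties---matches the paper's Lemma~\ref{lemma_rel}. The gap is in how you control $\|\grad(u-\nu_h)\|_{\CT_h}$. You read the datum $\bq_K$ in the auxiliary problem~\eqref{eq:aux_problem} as the exact flux $\bq|_K$; the paper means the \emph{discrete} flux $\bq_h|_K$ (the notation is admittedly ambiguous, but this reading is forced by the paper's proof, where $(\grad\theta_h,\grad v_K)_K+(\bq_h,\grad v_K)_K$ must cancel). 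With the intended reading, subtracting~\eqref{eq:aux_problem} from~\eqref{eq:saddlepoint_a} leaves no flux term at all and yields the exact local identity $\|\grad(\theta_h-\nu_h)\|_K=\widetilde{\eta}_K$, since $(\theta_h-\nu_h)|_K\in\CV_{*,K}^{p+2}$ realizes the supremum. Combined with the triangle inequality and Assumption~\ref{eq:saturation}, this gives $\|\grad(u-\nu_h)\|_{\CT_h}\leq(1-\delta)^{-1}\widetilde{\eta}$ directly---no absorption step is needed anywhere in the proof.

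With your reading, the argument does not close. You arrive at
\begin{equation*}
(1-\delta)\|\grad(u-\nu_h)\|_{\CT_h}\leq\widetilde{\eta}+\Bigl(\sum_{K\in\CT_h}\|\bq-\bq_h\|_{*,K}^2\Bigr)^{1/2},
\end{equation*}
and the only available bound for the last term is $\|\bq-\bq_h\|_{*,K}\leq\widetilde{\eta}_K+\|\grad(u-\nu_h)\|_K$, whose coefficients are exactly $1$: they come from Cauchy--Schwarz in the definition~\eqref{dual_discreta} and cannot be weighted down (Young-type weighting redistributes products, not sums with unit coefficients). Substituting gives $(1-\delta)\|\grad(u-\nu_h)\|_{\CT_h}\leq 2\widetilde{\eta}+\|\grad(u-\nu_h)\|_{\CT_h}$, i.e., $-\delta\|\grad(u-\nu_h)\|_{\CT_h}\leq 2\widetilde{\eta}$, which is vacuous: your absorption constant $\kappa$ is $1$, not something strictly below $1-\delta$. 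The repair is precisely to pose~\eqref{eq:aux_problem} with $\bq_h$, so that $\theta_h-\nu_h$ is determined by $\varepsilon_h$ alone and the saturation hypothesis carries the entire burden of relating the computable quantity $\widetilde{\eta}$ to the unknown $u$.
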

\begin{proof}
	The proof requires auxiliary results and has been shifted entirely to Appendix \ref{append:rel}.
\end{proof}

\begin{remark}[Upper bound for the oscillation term]
	As a consequence of the orthogonality property \eqref{Fortin_orthog} of the Fortin operator $ \Pi_{\partial K} $, the oscillation term $ {\rm osc}_K(\bq) $ is bounded above by the best-approximation error
	\begin{equation*}
		h_K^{1/2}\inf_{\bp_h\in\BCH_h^p}\|(\bq-\bp_h)\cdot\bn\|_{\partial K}.
	\end{equation*}
\end{remark}

\begin{remark}[PDE-independence]
	Notice that our a posteriori error estimates are independent of the underlying PDE. Enjoying such a property was one of the main reasons why we have used the discrete dual norm contained in $ \|\cdot\|_{*,h} $. In fact, if we had proceeded bounding $ \|\bq-\bq_h\|_{\Omega} $ as in \cite{MR2240629}, we would have found the quantity $ \div(\bq-\bq_h) $ (see \cite[Theorems 3.2 and 3.3]{MR2240629}), which may generate PDE-dependent additional terms in the error estimates. Let us be more precise. Consider the following PDE in primal and mixed forms 
	\begin{align*}
		(-\Delta+\CR)u=f\hspace{1cm}\Longleftrightarrow\hspace{1cm}\begin{cases}
			\hspace{0.55cm}\bq+\grad u=0\\
			\div\bq+\CR u=f
		\end{cases}\!\!\!\!,
	\end{align*}	
	where $ \CR $ is a non-diffusive differential operator. We have two possible scenarios: If $ \CR $ is the null operator (i.e., Poisson equation), then $ \div(\bq-\bq_h)=f-Q_h^{p-1}f $, which is a standard data oscillation term in a posteriori error estimation. However, if $ \CR $ is not the null operator (i.e., a more general PDE), then we have
	\begin{align*}
		\div(\bq-\bq_h)=f-\div\bq_h-\CR\nu_h-\CR(u-\nu_h),
	\end{align*}
	which clearly contains undesirable PDE-dependent additional terms to control.
	
	To properly bound $ \|\bq-\bq_h\|_{\Omega} $ without loosing PDE-independency, we will include additonal terms in an improved a posteriori estimator (see \S\ref{sec:improved_estimator}).
\end{remark}	

To close this section, we state and prove our local efficiency estimate. 

\begin{theorem}[Local efficiency]\label{eff}
	The following holds true:
	\begin{align}\label{eq:efficiency}
		\widetilde{\eta}_K&\leq \|\grad(u-\nu_h)\|_K+\|\bq-\bq_h\|_{*,K}\,,
	\end{align}
	for all $ K\in\CT_h$. 
\end{theorem}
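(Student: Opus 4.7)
The plan is to exploit the first equation of the saddle point system~\eqref{eq:saddlepoint_a} by testing it against $v_K = \varepsilon_K$. This is legitimate since $\varepsilon_K \in \CV_{*,K}^{p+2}$ belongs to the test space. Doing so yields the identity
\begin{equation*}
\|\grad\varepsilon_K\|_K^2 = -(\bq_h + \grad\nu_h,\grad\varepsilon_K)_K,
\end{equation*}
which expresses the square of the estimator as a residual of the discrete constitutive equation tested against $\grad\varepsilon_K$.

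Next, I would inject the continuous constitutive relation $\bq + \grad u = \mathbf{0}$ by adding and subtracting $\bq$ (equivalently, $-\grad u$) inside the parenthesis, which rewrites the integrand as
\begin{equation*}
-(\bq_h + \grad\nu_h) = (\bq - \bq_h) + \grad(u - \nu_h).
\end{equation*}
This splits the identity into two pieces:
\begin{equation*}
\|\grad\varepsilon_K\|_K^2 = (\bq-\bq_h,\grad\varepsilon_K)_K + (\grad(u-\nu_h),\grad\varepsilon_K)_K.
\end{equation*}

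Finally, I would bound each term. Since $\varepsilon_K \in \CV_{*,K}^{p+2}$, the definition of the discrete dual norm~\eqref{dual_discreta} yields $(\bq-\bq_h,\grad\varepsilon_K)_K \leq \|\bq-\bq_h\|_{*,K}\,\|\grad\varepsilon_K\|_K$. The second term is handled directly by Cauchy-Schwarz, producing $\|\grad(u-\nu_h)\|_K\,\|\grad\varepsilon_K\|_K$. Dividing through by $\|\grad\varepsilon_K\|_K$ (the estimate is trivial if it vanishes) delivers~\eqref{eq:efficiency}.

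There is essentially no technical obstacle here: the argument is a direct consequence of the way $\varepsilon_K$ is defined as a Riesz representative in the dual norm on $\CV_{*,K}^{p+2}$, combined with the constitutive equation. What makes the statement clean is precisely the choice of the discrete dual norm $\|\cdot\|_{*,K}$: it is engineered so that the supremum defining it can absorb the flux-error contribution without leaving any unmatched remainder.
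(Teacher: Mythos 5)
Your proof is correct and follows essentially the same route as the paper: the identity obtained from~\eqref{eq:saddlepoint_a} together with $\bq+\grad u=\mathbf{0}$, followed by the dual-norm bound for the flux term and Cauchy--Schwarz for the gradient term. The only cosmetic difference is that you test with $v_K=\varepsilon_K$ and divide by $\|\grad\varepsilon_K\|_K$, whereas the paper writes $\widetilde{\eta}_K$ as the supremum of $(\grad\varepsilon_K,\grad v_K)_K/\|\grad v_K\|_K$ over all $v_K\in\CV_{*,K}^{p+2}$ and bounds each term of the identity uniformly in $v_K$.
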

\begin{proof}
	Thanks to equation~\eqref{eq:saddlepoint_a} and the fact that $ \bq+\grad u=0 $, for each $ K\in\CT_h $, we have
	\begin{align*}
		(\grad\varepsilon_K,\grad v_K)_K&=-(\grad\nu_K,\grad v_K)_K
		-(\bq_h,\grad v_K)_K\\
		&=(\grad(u-\nu_K),\grad v_K)_K+(\bq-\bq_h,\grad v_K)_K\,,
	\end{align*}
	for all $ v_K\in \CV_{*,K}^{p+2}$\,. Thus, we get
	\begin{align*}
		\widetilde{\eta}_K=\sup_{v_K\in \CV_{*,K}^{p+2}}\frac{(\grad\varepsilon_K ,\grad v_K)_K}{\|\grad v_K\|_K}\leq \|\grad(u-\nu_K)\|_K
		+\|\bq-\bq_h\|_{*,K},
	\end{align*}
	and \eqref{eq:efficiency} follows.
\end{proof}

\subsubsection{An improved a posteriori error estimator}\label{sec:improved_estimator}

We study here the behavior of an alternative to the estimator $ \widetilde{\eta}_K\eq\|\grad\varepsilon_K\|_K $, which let us deal with estimates related to $ \|\bq-\bq\|_K $ (instead of the discrete dual norm $ \|\bq-\bq_h\|_{*,K} $). Let us define the estimator $ \eta\eq\left(\sum_{K\in\CT_h}\eta_K^2\right)^{1/2} $, with
\begin{equation}\label{eq:eta}
	\eta_K^2\eq \widetilde{\eta}_K^2+\|\bq_h+\grad\nu_h\|_K^2+\frac{1}{2}\sum_{F\in\CF_K^{\rm i}}h_F^{-1}\|\!\jmp{\nu_h}\|_{F}^2+\sum_{F\in\CF_K^{\rm e}}h_F^{-1}\|u_D-\nu_h\|_{F}^2\,.
\end{equation}

To start, we present the reliability result for our improved estimator $ \eta $.

\begin{theorem}[Reliability, improved estimator]\label{rel_improved}
	Let $(u,\bq) \in \CV \times \BCH$ be the solution of the continuous problem~\eqref{eq:model_problem}; let $(u_h,\bq_h) \in \CV_h^{p-1} \times \BCH_h^p$ be the solution of the discrete problem~\eqref{eq:discrete_formulation}; and let $(\varepsilon_h,\nu_h)\in \CV_{*,h}^{p+2}\times \CV_h^{p+1} $ solve \eqref{eq:saddlepoint}. If Assumptions \ref{fortin_assumpt} and \ref{eq:saturation} are satisfied, then the following estimate holds:
	\begin{align}\label{eq:reliability_alt}
		\|u-\nu_h\|_{1,h}+\|\bq-\bq_h\|_{0,h}\lesssim\eta+{\rm osc}(\bq).
	\end{align}
	
\end{theorem}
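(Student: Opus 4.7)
The plan is to derive \eqref{eq:reliability_alt} as a direct upgrade of the estimate \eqref{eq:reliability} already proved in Theorem~\ref{rel}: the left-hand side of \eqref{eq:reliability_alt} strengthens the gradient-only control on $u-\nu_h$ to the full broken $\|\cdot\|_{1,h}$-norm, and the dual norm $\|\cdot\|_{*,h}$ on $\bq-\bq_h$ to the $L^2$-based $\|\cdot\|_{0,h}$-norm. The additional contributions in the definition~\eqref{eq:eta} of $\eta$, namely the element residual $\|\bq_h+\grad\nu_h\|_K$ and the face terms measuring the interelement jumps and Dirichlet mismatch of $\nu_h$, are precisely tailored to absorb this upgrade without ever touching the PDE operator.

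First I would expand $\|u-\nu_h\|_{1,h}$ locally. Since $u\in H^1(\Omega)$, its jumps across interior faces vanish and $u|_{\partial\Omega}=u_D$ holds on boundary faces. Therefore
\begin{equation*}
|u-\nu_h|_{1,K,h}^2=\|\grad(u-\nu_h)\|_K^2+\tfrac12\!\sum_{F\in\CF_K^{\rm i}}h_F^{-1}\|\jmp{\nu_h}\|_F^2+\sum_{F\in\CF_K^{\rm e}}h_F^{-1}\|u_D-\nu_h\|_F^2,
\end{equation*}
so the two face contributions coincide verbatim with the last two groups of terms appearing in $\eta_K^2$, while Theorem~\ref{rel} already provides $\|\grad(u-\nu_h)\|_{\CT_h}\lesssim\widetilde\eta+{\rm osc}(\bq)\le\eta+{\rm osc}(\bq)$.

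Next I would bound $\|\bq-\bq_h\|_{0,h}$. The boundary contributions $h_K\|(\bq-\bq_h)\cdot\bn\|_{\partial K}^2$ are already embedded in $\|\cdot\|_{*,h}$ and hence controlled by Theorem~\ref{rel}. For the remaining element-interior $L^2(K)$-part I would invoke the pointwise identity
\begin{equation*}
\bq-\bq_h=-\grad(u-\nu_h)-(\bq_h+\grad\nu_h)\quad\text{in }K,
\end{equation*}
which is the algebraic consequence of $\bq=-\grad u$ on $\Omega$, and which is precisely the motivation for including the residual $\|\bq_h+\grad\nu_h\|_K$ in $\eta_K$. The triangle inequality yields $\|\bq-\bq_h\|_K\le\|\grad(u-\nu_h)\|_K+\|\bq_h+\grad\nu_h\|_K$; squaring, summing over $K\in\CT_h$, and reusing the gradient bound from the previous paragraph then delivers $\sum_K\|\bq-\bq_h\|_K^2\lesssim\eta^2+{\rm osc}(\bq)^2$.

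Assembling the two bounds by the triangle inequality concludes \eqref{eq:reliability_alt}. There is no real obstacle here beyond identifying the right additional terms to put into $\eta_K$: once Theorem~\ref{rel} is in hand, the improved estimator supplies, through the element residual and the jump/Dirichlet face terms, exactly the three ingredients that the stronger norms $\|\cdot\|_{1,h}$ and $\|\cdot\|_{0,h}$ demand beyond what $\widetilde\eta$ already controls.
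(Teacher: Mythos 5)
Your proposal is correct and follows essentially the same route as the paper's proof: it combines the gradient bound from Theorem~\ref{rel}, the identities $\jmp{u-\nu_h}=-\jmp{\nu_h}$ and $u-\nu_h=u_D-\nu_h$ on faces, the triangle inequality $\|\bq-\bq_h\|_K\le\|\grad(u-\nu_h)\|_K+\|\bq_h+\grad\nu_h\|_K$ for the element part, and the already-established control of the normal-trace terms. Your observation that the boundary contributions of $\|\cdot\|_{0,h}$ coincide with those of $\|\cdot\|_{*,h}$ is just a repackaging of the paper's direct appeal to \eqref{eq:reliability_prev3}, so there is no substantive difference.
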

\begin{proof}
	See Appendix \ref{append:rel_improved}.
\end{proof}

To conclude this section, based on Theorem \ref{eff}, we derive the following local efficiency estimate:

\begin{theorem}[Local efficiency, improved estimator]\label{eff_improved}
	For all $ K\in\CT_h$, the following estimate holds true:
	\begin{align}\label{eq:efficiency_alt}
		\eta_K&\leq |u-\nu_h|_{1,K,h}+\|\bq-\bq_h\|_K\,.
	\end{align} 
\end{theorem}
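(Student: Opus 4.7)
The proof reduces to a term-by-term comparison between the four summands defining $\eta_K^2$ in \eqref{eq:eta} and the analogous pieces of the right-hand side. I would begin with the residual-representative contribution $\widetilde{\eta}_K^2$: Theorem \ref{eff} already provides $\widetilde{\eta}_K \leq \|\grad(u-\nu_h)\|_K + \|\bq-\bq_h\|_{*,K}$, and applying Cauchy--Schwarz inside the supremum defining the discrete dual norm \eqref{dual_discreta} yields $\|\bq-\bq_h\|_{*,K} \leq \|\bq-\bq_h\|_K$. Hence $\widetilde{\eta}_K$ is already controlled by $\|\grad(u-\nu_h)\|_K + \|\bq-\bq_h\|_K$.

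For the second volumetric term $\|\bq_h+\grad\nu_h\|_K^2$, I would exploit the pointwise constitutive identity $\bq + \grad u = \mathbf 0$ to write
\begin{equation*}
\bq_h + \grad \nu_h \,=\, (\bq_h-\bq) + \grad(\nu_h - u) + (\bq+\grad u) \,=\, -(\bq-\bq_h) - \grad(u-\nu_h),
\end{equation*}
and conclude by the triangle inequality that $\|\bq_h+\grad\nu_h\|_K \leq \|\bq-\bq_h\|_K + \|\grad(u-\nu_h)\|_K$, matching the same pair of quantities as for $\widetilde{\eta}_K$.

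The two face contributions are handled via the $H^1$-conformity of the exact solution. Since $u \in H^1(\Omega)$, its trace is single-valued across every interior face $F \in \CF_K^{\rm i}$, so $\jmp{u-\nu_h}_F = -\jmp{\nu_h}_F$ and therefore $\|\jmp{\nu_h}\|_F = \|\jmp{u-\nu_h}\|_F$. Similarly, the Dirichlet condition $u = u_D$ on $\partial\Omega$ gives $\|u_D-\nu_h\|_F = \|u-\nu_h\|_F$ for every $F \in \CF_K^{\rm e}$. Consequently, the face part of $\eta_K^2$ coincides exactly with the face part of $|u-\nu_h|_{1,K,h}^2$.

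Assembling the four bounds under the square root and using the elementary inequality $\sqrt{a^2+b^2}\leq a+b$ for nonnegative $a,b$ delivers \eqref{eq:efficiency_alt}. I do not anticipate any substantive obstacle: the whole argument rests on Theorem \ref{eff}, the pointwise identity $\bq+\grad u = \mathbf 0$, and the $H^1$-conformity of $u$, with the only mild subtlety being the bookkeeping needed to combine the two volumetric bounds and the two matching face identities into a single clean expression of the form $|u-\nu_h|_{1,K,h} + \|\bq-\bq_h\|_K$.
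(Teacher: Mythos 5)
Your proof follows the paper's argument essentially verbatim: Theorem \ref{eff} together with $\|\bq-\bq_h\|_{*,K}\leq\|\bq-\bq_h\|_K$ for the term $\widetilde{\eta}_K$, the triangle inequality applied to $\bq_h+\grad\nu_h=-(\bq-\bq_h)-\grad(u-\nu_h)$ for the second volumetric term, and the two face identities $\|\!\jmp{\nu_h}\|_{F}=\|\!\jmp{u-\nu_h}\|_{F}$ and $\|u_D-\nu_h\|_{F}=\|u-\nu_h\|_{F}$ for the jump contributions. The only caveat---which your write-up shares with the paper's own one-line assembly---is that combining \emph{two} volumetric terms, each bounded by $\|\grad(u-\nu_h)\|_K+\|\bq-\bq_h\|_K$, inside the square root of $\eta_K^2$ inevitably produces a factor $\sqrt{2}$, so the argument actually delivers $\eta_K\lesssim|u-\nu_h|_{1,K,h}+\|\bq-\bq_h\|_K$ rather than the bound with constant exactly $1$; this is immaterial for local efficiency but worth stating honestly in the final step.
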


\begin{proof}
	Notice the identities
	\begin{align*}
		\big\|\!\jmp{\nu_h}\big\|_{F_{\rm i}}&=\big\|\!\jmp{u-\nu_h}\big\|_{F_{\rm i}}\qquad\text{and}\qquad\|u_D-\nu_h\|_{F_{\rm e}}=\|u-\nu_h\|_{F_{\rm e}}\,,
	\end{align*}
	for $ F_{\rm i}\in\CF_K^{\rm i} $ and $ F_{\rm e}\in\CF_K^{\rm e} $, respectively.
	Thus, \eqref{eq:efficiency_alt} follows from \eqref{eq:efficiency}, the triangle inequality, and the fact that $ \|\bq-\bq_h\|_{*,K}\leq\|\bq-\bq_h\|_K $, for all $ K\in\CT_h $\,.
\end{proof}

\section{Numerical validation}
\label{sec:experiments}
In this section, we present three numerical examples (in two dimensions) to illustrate the performance of our scheme and validate our theoretical findings. We take a BDM mixed finite element discretization \cite{BDDF,BDM} as the input for our scheme. We consider $ h $-adaptive iterative refinements combining D\"orfler marking together with a bisection-type refinement criterion \cite{bank1983some}.

\subsection{A smooth solution}
We consider the Poisson problem \eqref{eq:model_problem} over the unit square $ \Omega\eq(0,1)^2\subset\mathbb{R}^2 $. We set $ u_D(x_1,x_2)\eq 0 $ on $ \partial\Omega $ and $ f $ such that the exact solution is given by $ u(x_1,x_2)\eq x_1(1-x_1)\sin(\pi x_2) $ in $ \Omega $. We study the behavior of our scheme together with our two a posteriori error estimators. We analyze convergence rates and the effectivity index associated with $ \eta $ for uniformly refined meshes.

For different polynomial order $ p $, Figures \ref{fig:curves_smooth_p1}, \ref{fig:curves_smooth_p2}, and \ref{fig:curves_smooth_p3} show the convergence curves associated with each error measurement, namely, $ \|u-\nu_h\|_{1,h} $, $ \|\bq-\bq_h\|_h $, and $ \|\bq-\bq_h\|_{*,h} $. The first one exhibits superconvergence and the last two follow optimal convergence rates, as expected. In addition to the errors, we display the curves associated with both estimators, namely, $ \eta $ and $ \widetilde{\eta} $. Figure \ref{fig:curves_smooth_L2} compares the converge rates between the $ L^2 $-error associated with the mixed BDM scheme, and the $ L^2 $-error of the postprocessed solution.

\begin{figure}[h!]
	\centering
	\begin{subfigure}{0.49\linewidth}\vspace{0.1cm}
		\centering
		\includegraphics[width=\linewidth]{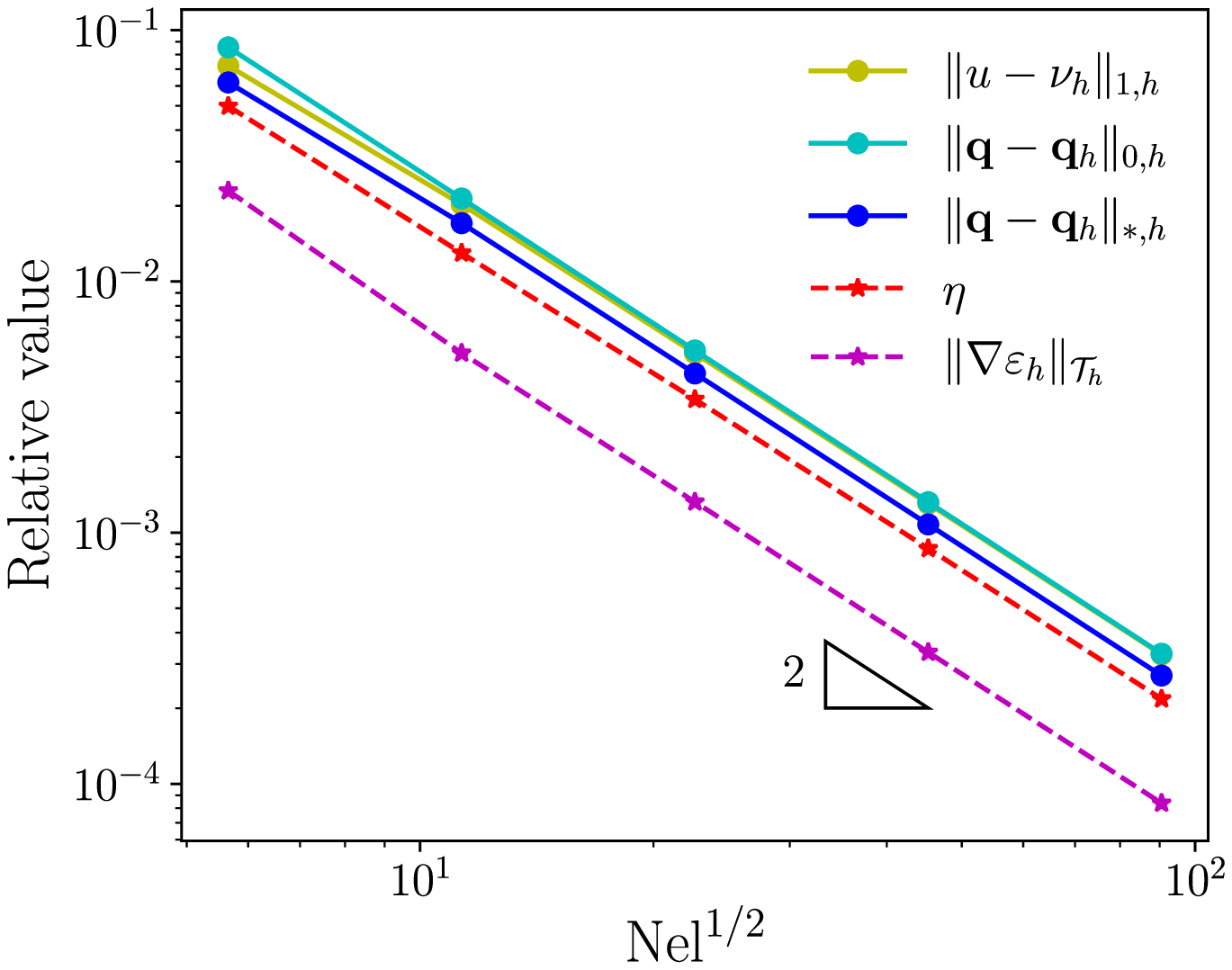}
		\caption{Error/estimator vs. $ {\rm Nel}^{1/2} $, $ p=1 $}
		\label{fig:curves_smooth_p1}
	\end{subfigure}
	\hfill
	\begin{subfigure}{0.49\linewidth}
		\centering
		\includegraphics[width=\linewidth]{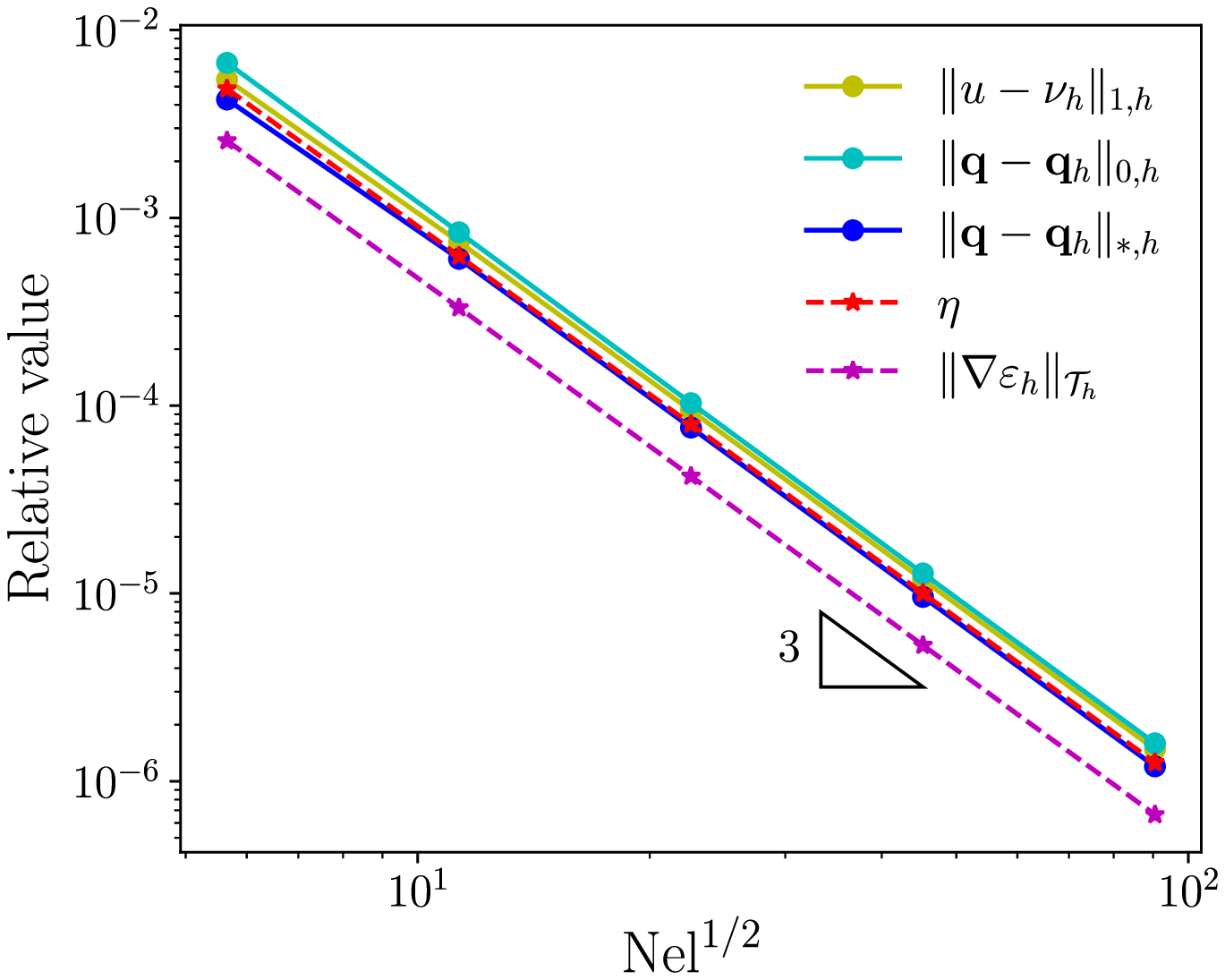}
		\caption{Error/estimator vs. $ {\rm Nel}^{1/2} $, $ p=2 $}
		\label{fig:curves_smooth_p2}
	\end{subfigure}\\
	\vspace{0.25cm}
	\begin{subfigure}{0.49\linewidth}
		\centering
		\includegraphics[width=\linewidth]{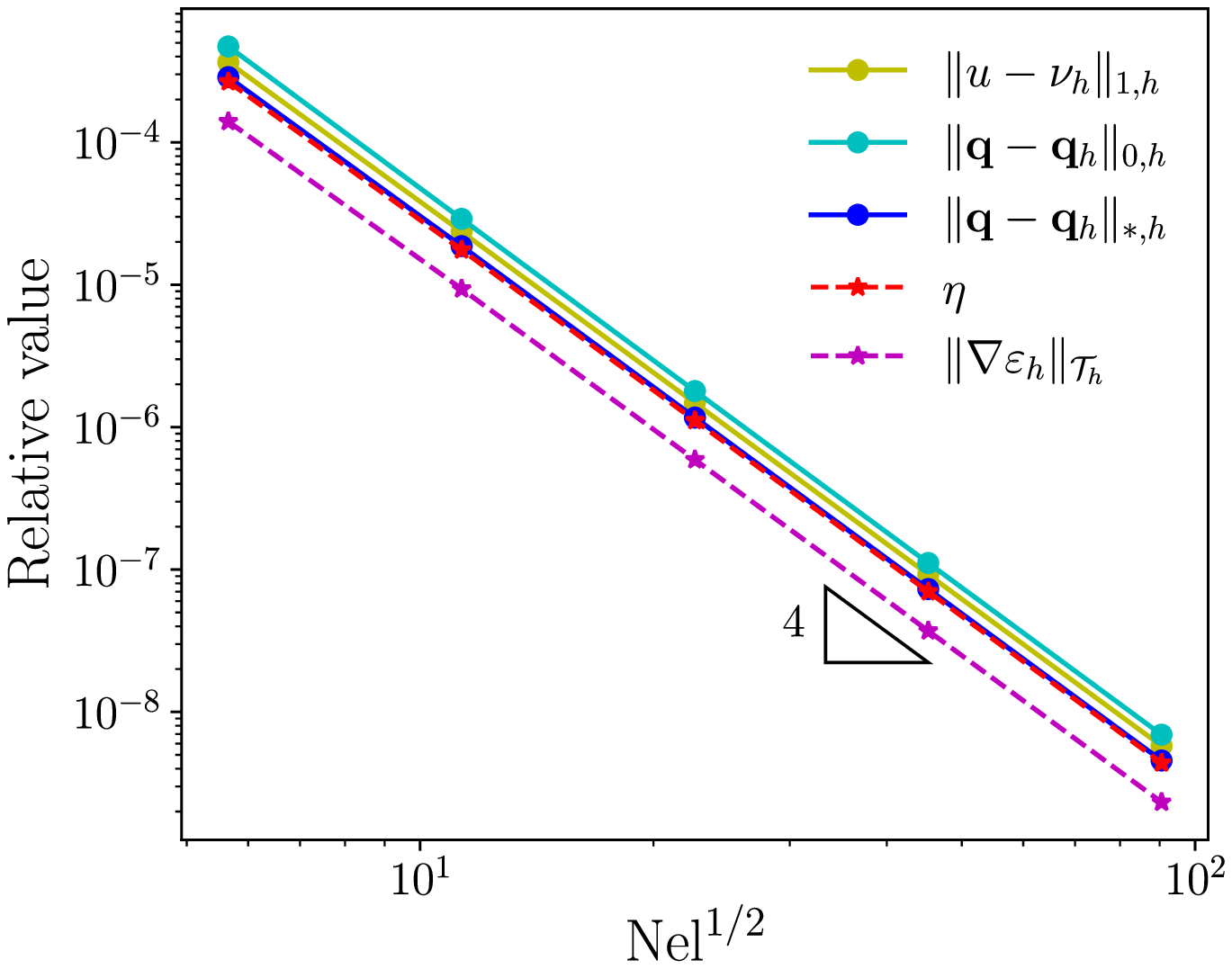}
		\caption{Error/estimator vs. $ {\rm Nel}^{1/2} $, $ p=3 $}
		\label{fig:curves_smooth_p3}
	\end{subfigure}
	\hfill
	\begin{subfigure}{0.49\linewidth}
		\centering
		\includegraphics[width=\linewidth]{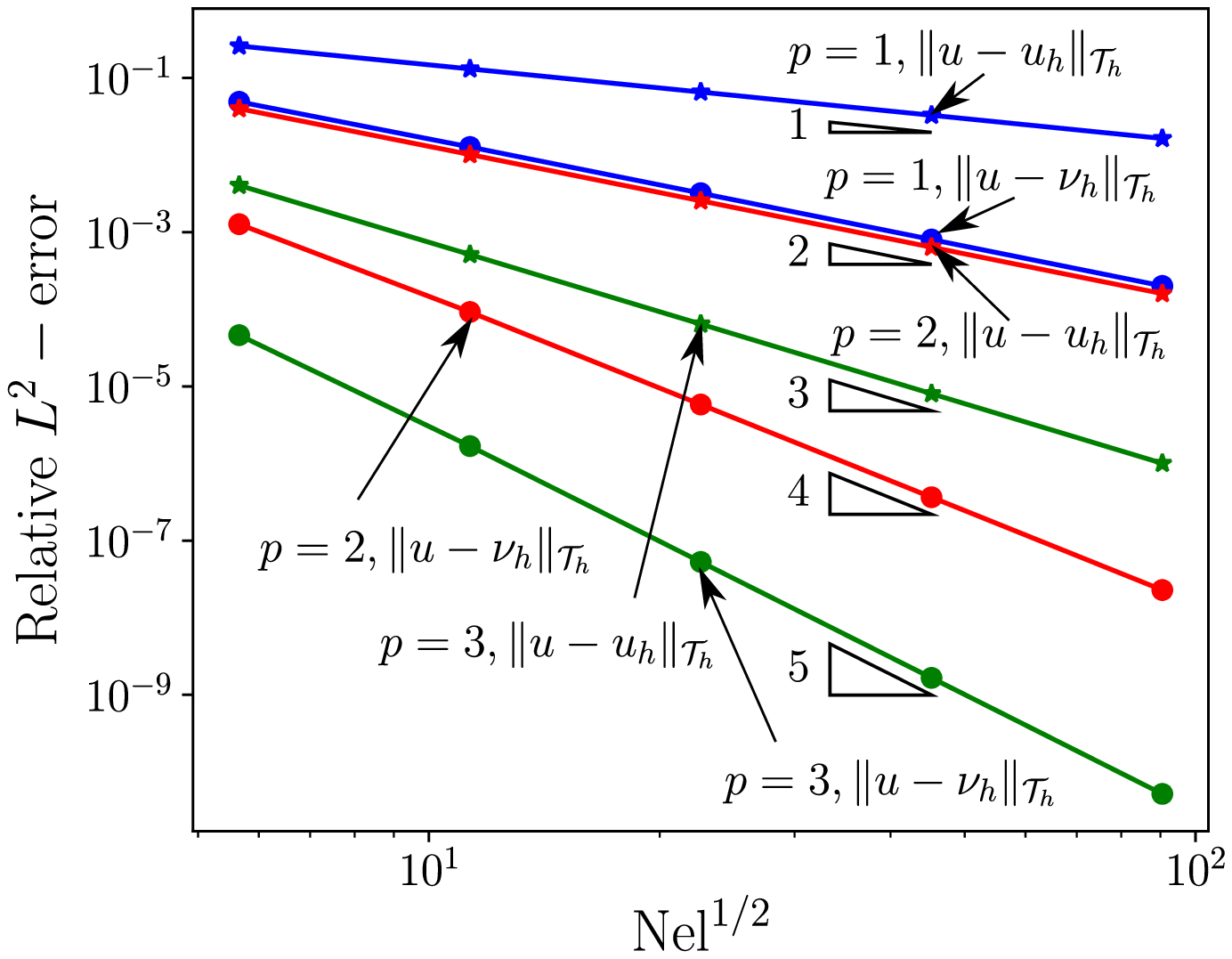}
		\caption{Increment in order of convergence}
		\label{fig:curves_smooth_L2}
	\end{subfigure}
	\caption{Convergence rates for a smooth solution scenario under uniform mesh refinements.}
	\label{fig:conv_rates_smooth}
\end{figure}
\newpage
We notice that the behavior is consistent with Remark \ref{thm:apriori_aux}, that is, we gain two orders of convergence for any integer $ p\geq 2 $, and only one order for $ p=1 $, since $ f\notin\CV_h^{p-1} $ in this case.

On another hand, Figure \ref{fig:efficiency_smooth_1} shows how the effectivity index of $ \eta $ becomes uniform with respect to $ h $, for each polynomial order $ p $ under consideration; while Figure \ref{fig:efficiency_smooth_2} displays the constant $ \delta $ involved in the saturation Assumption \ref{eq:saturation}. Notice that the value of $ \delta $ remains bounded uniformly with respect to $ h $ for each polynomial order $ p $. Observe that $ \delta $ decreases as $ p $ increases, which would allow us to conjecture two things: $ \delta < 1 $, for all $ p\in\mathbb{N} $; and, as $ p $ increases, a smaller reliability constant in \eqref{eq:reliability} is obtained (see the proof of Theorem \ref{rel} in Appendix \ref{append:rel}).

\begin{figure}[h!]
	\centering
	\begin{subfigure}{0.49\textwidth}
		\centering
		\includegraphics[width=\textwidth]{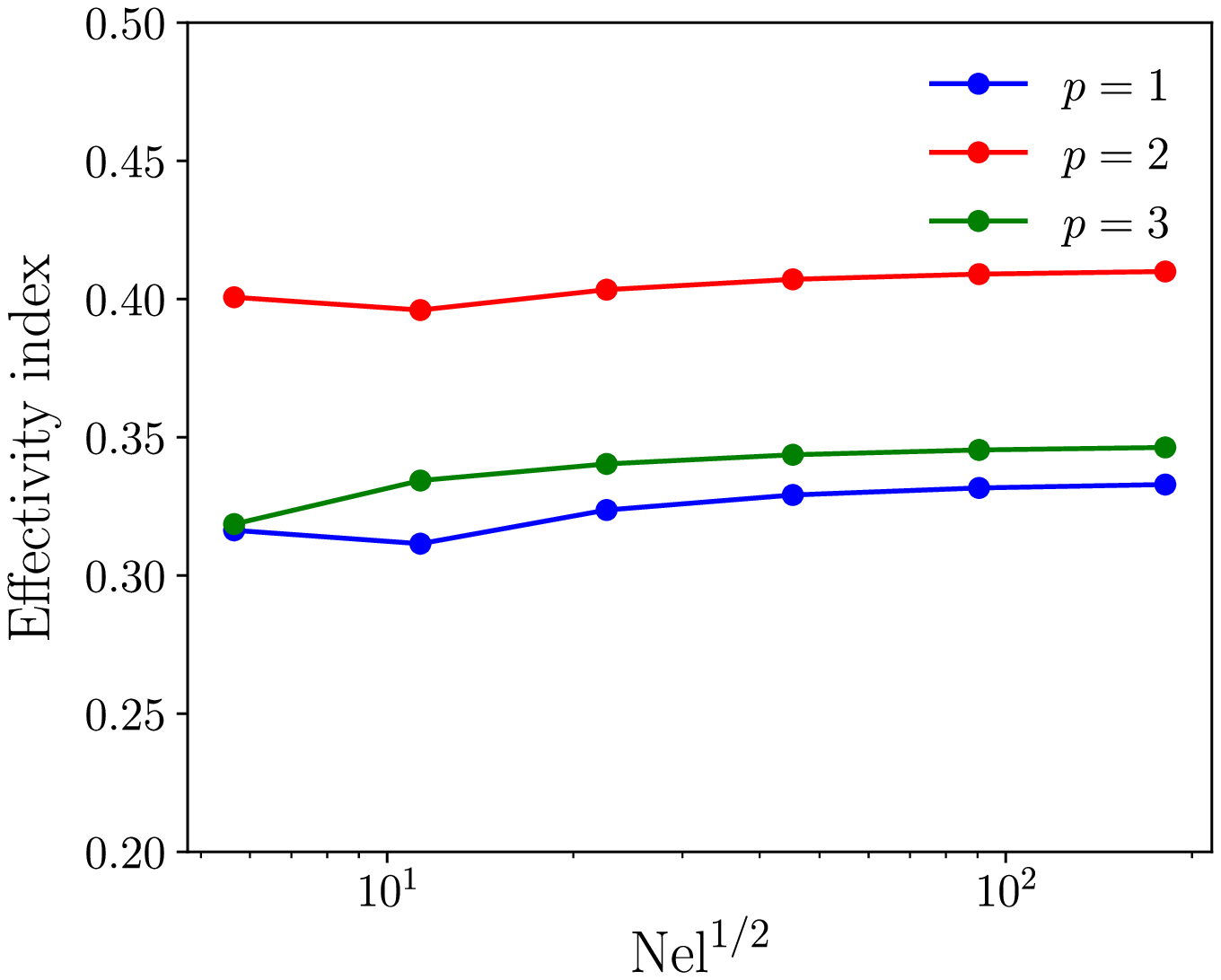}
		\caption{Effectivity index of $ \eta $ vs. $ {\rm Nel}^{1/2} $}
		\label{fig:efficiency_smooth_1}
	\end{subfigure}
	\hfill
	\begin{subfigure}{0.475\textwidth}
		\centering
		\includegraphics[width=\textwidth]{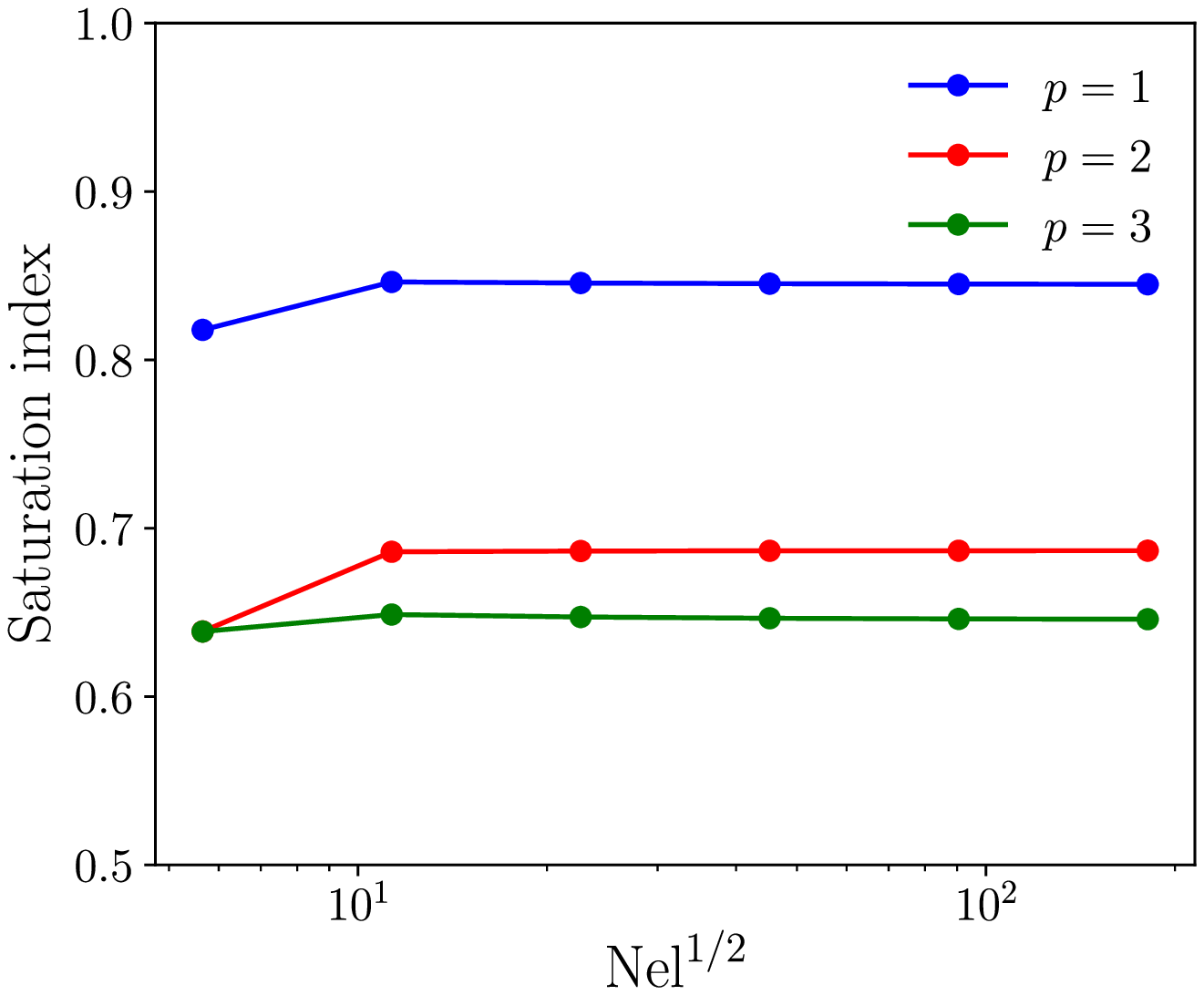}
		\caption{Saturation constant $ \delta $ vs. $ {\rm Nel}^{1/2} $}
		\label{fig:efficiency_smooth_2}
	\end{subfigure}
	\caption{Effectivity index and saturation constant for a smooth solution scenario.}
	\label{fig:efficiency_smooth}
\end{figure}

\subsection{A low-regularity solution}\label{L}
We consider again the Poisson problem \eqref{eq:model_problem}, but now on the L-shaped domain $ \Omega\eq(-1,1)^2\setminus(-1,0)^2 $. We set $ f\equiv 0 $ and $ u_D $ such that the exact solution in polar coordinates is $ u(r,\vartheta)=r^{2/3}\sin\left(\frac{2}{3}(\pi-\vartheta)\right) $. We note that $ u\in H^{2/3-\epsilon}(\Omega) $, for all $ \epsilon>0 $, due to a singularity located at the origin $ (0,0) $. 

For adaptively refined meshes, Figure \ref{fig:conv_rates_L_eta} shows the convergence curves associated with the error estimator $ \eta $; while Figure \ref{fig:conv_rates_L_H1} shows the curves associated with the full error $ (\|u-\nu_h\|_{1,h}^2+\|\bq-\bq_h\|_{0,h}^2)^{1/2} $. Notice that we recover the convergence rates that we would have in a smooth scenario, together with superconvergence.

\begin{figure}[h!]
	\centering
	\begin{subfigure}{0.49\textwidth}
		\centering
		\includegraphics[width=\linewidth]{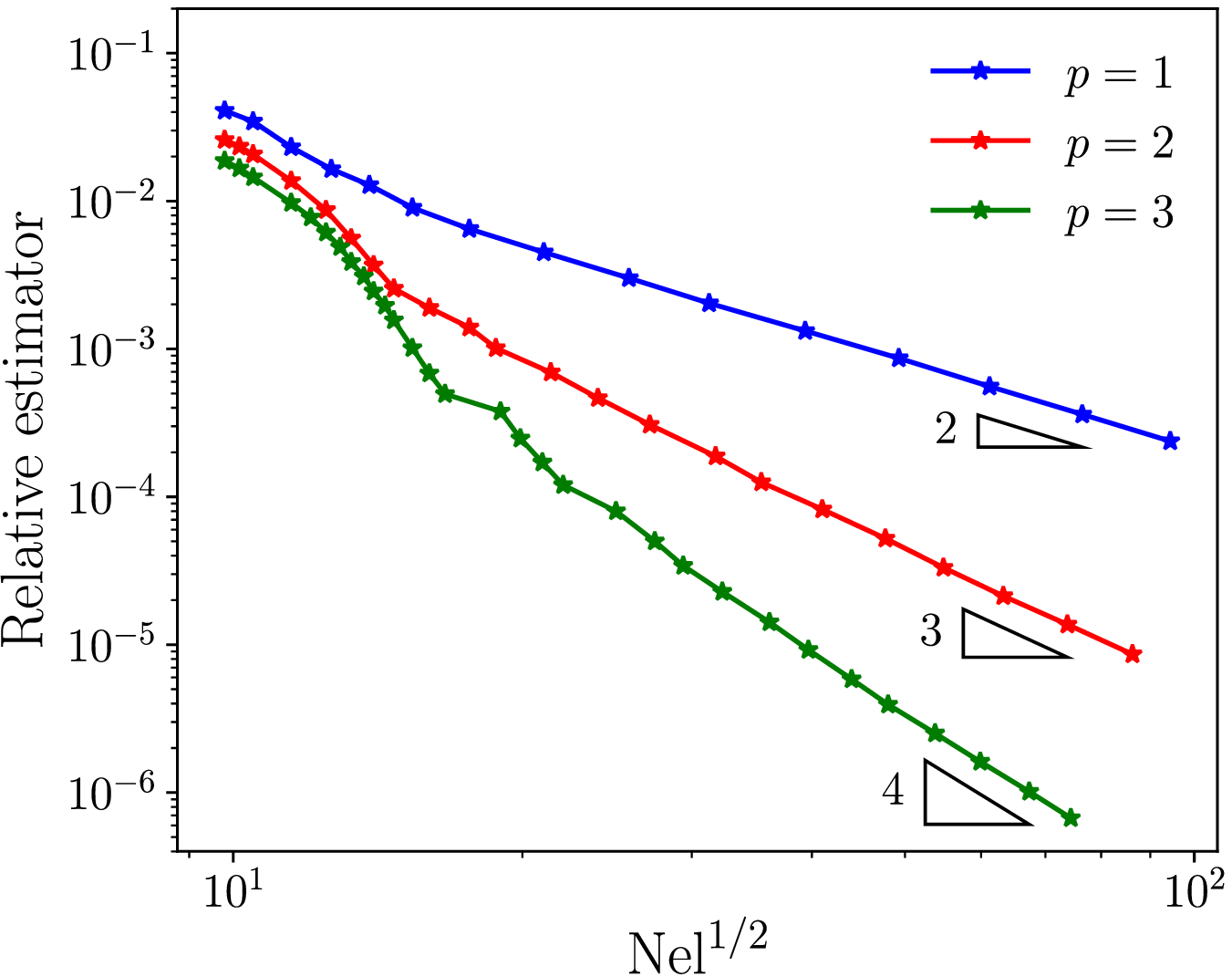}
		\caption{$ \eta $ vs. $ {\rm Nel}^{1/2} $}
		\label{fig:conv_rates_L_eta}
	\end{subfigure}
	\hfill
	\begin{subfigure}{0.49\textwidth}
		\centering	
		\includegraphics[width=\linewidth]{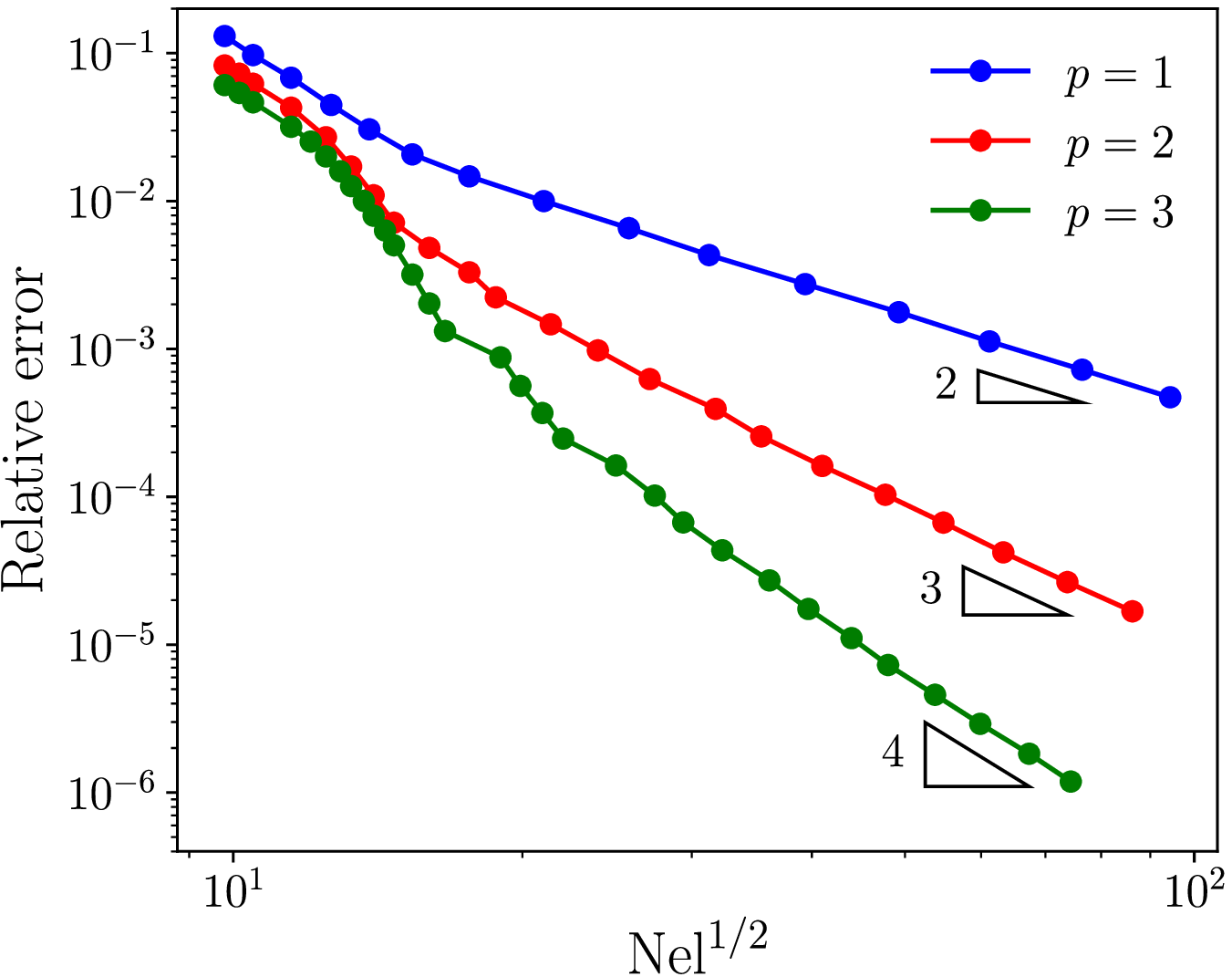}
		\caption{Full error vs. $ {\rm Nel}^{1/2} $}
		\label{fig:conv_rates_L_H1}
	\end{subfigure}
	\caption{Convergence curves for a low-regularity solution scenario.}
	\label{fig:conv_rates_L_shaped_H1_est}
\end{figure}

On the another hand, Figures \ref{fig:initial_mesh_L_shaped}, \ref{fig:mesh_5_L_shaped}, and \ref{fig:mesh_10_L_shaped} display the evolution of the adapted mesh refinement (driven by our estimator $ \eta $), and how the mesh is locally refined in a neighborhood of the corner $ (0,0) $, where the gradient of $ u $ has a singularity.

\begin{figure}[h!]
	\centering
	\begin{subfigure}{0.32\textwidth}
		\centering
		\includegraphics[width=\linewidth]{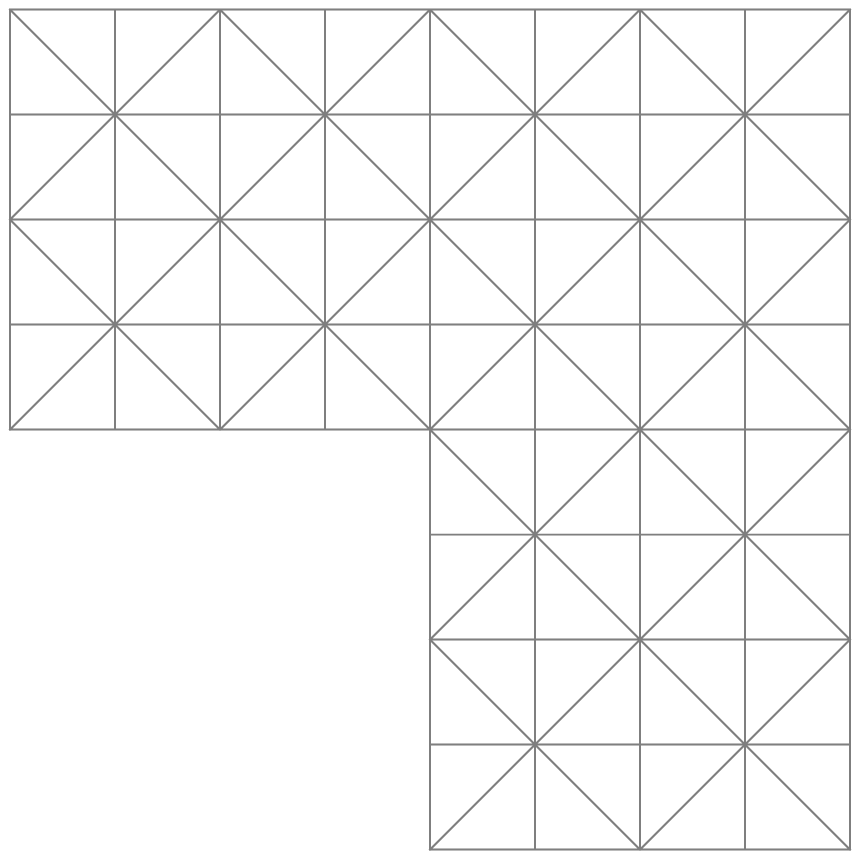}
		\caption{Initial (${\rm Nel}= 96 $)}
		\label{fig:initial_mesh_L_shaped}
	\end{subfigure}
	\hfill
	\begin{subfigure}{0.32\textwidth}
		\centering
		\includegraphics[width=\linewidth]{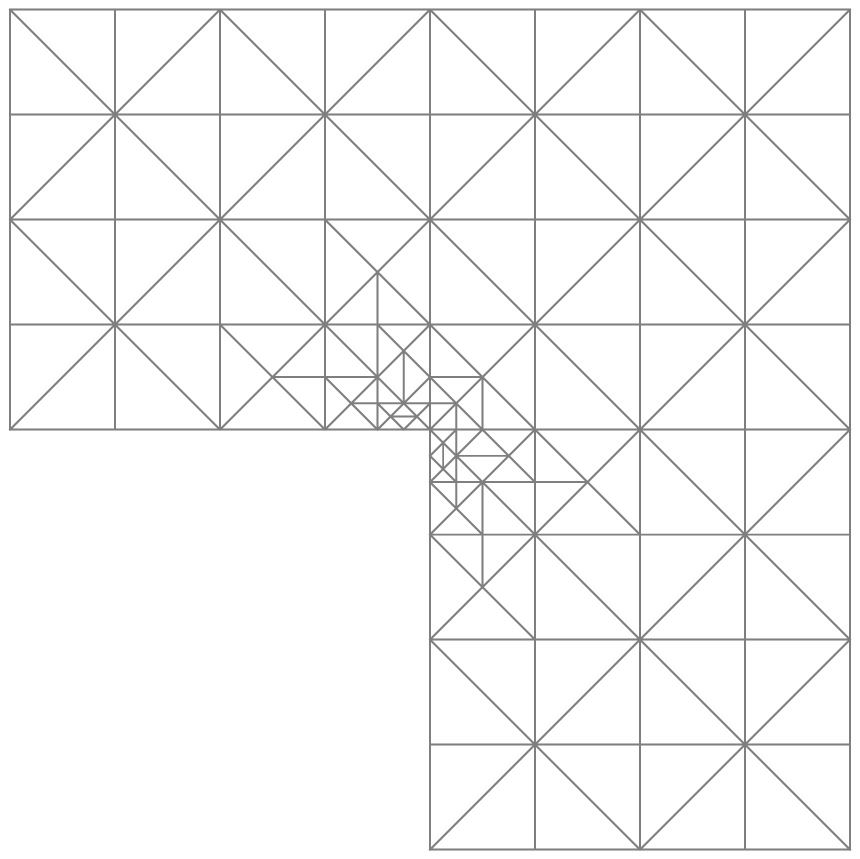}
		\caption{$ 5^{\text{th}} $ iteration (${\rm Nel}= 156 $)}
		\label{fig:mesh_5_L_shaped}
	\end{subfigure}
	\hfill
	\begin{subfigure}{0.32\textwidth}
		\centering
		\includegraphics[width=\linewidth]{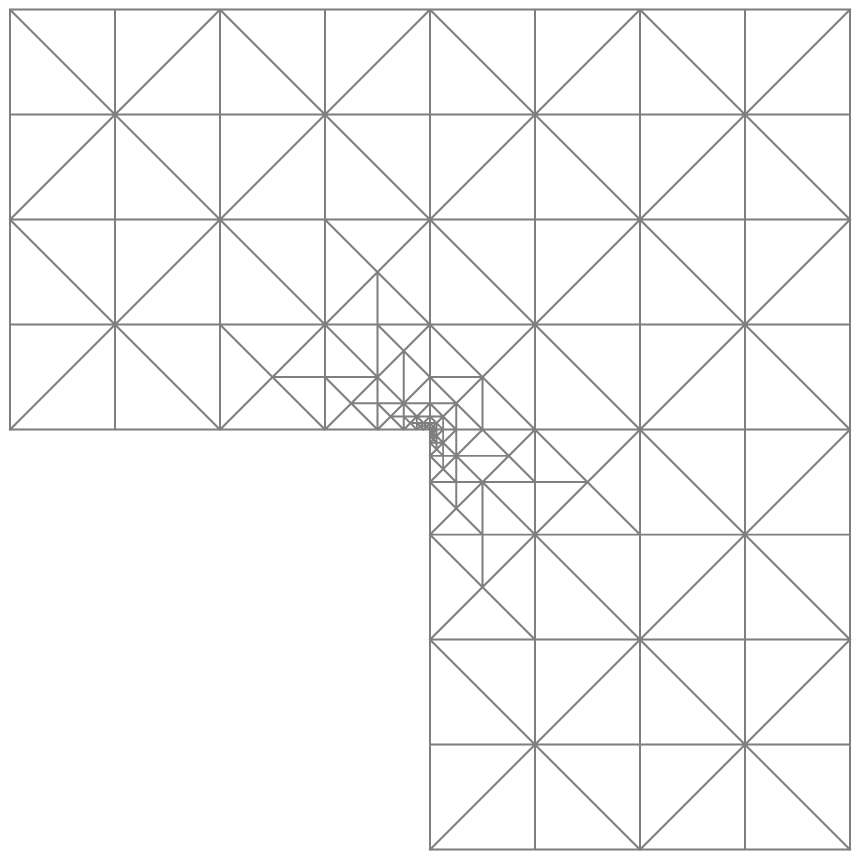}
		\caption{$ 10^{\text{th}} $ iteration (${\rm Nel}= 207 $)}
		\label{fig:mesh_10_L_shaped}
	\end{subfigure}
	\caption{Sequence of adapted meshes for a low-regularity solution scenario, with $ p=3 $.}
	\label{fig:adaptive_meshes_L_shaped}
\end{figure}

\noindent
\begin{minipage}{0.47\textwidth}\vspace{-1cm}
	Finally, in Figure \ref{fig:conv_rates_L_shaped_L2} we show the $ L^2 $-errors of the original finite element (BDM) solution $ u_h $, and the residual minimization postprocessed solution $ \nu_h $. Notice the increment in the convergence rates when we consider $ \nu_h $ instead of $ u_h $. The curves show optimal convergence rates for $ u_h $, and superconvergence for $ \nu_h $, thanks to the use of the adaptive mesh refinement process driven by $ \eta $.
	In contrast to Figure \ref{fig:curves_smooth_L2}, we gain two orders of convergence for any $ p\geq 1 $, since $ f\equiv 0\in\CV_h^{p-1} $ in this experiment.
\end{minipage}\hfill
\begin{minipage}{0.48\textwidth}
	\centering
	\includegraphics[width=\linewidth]{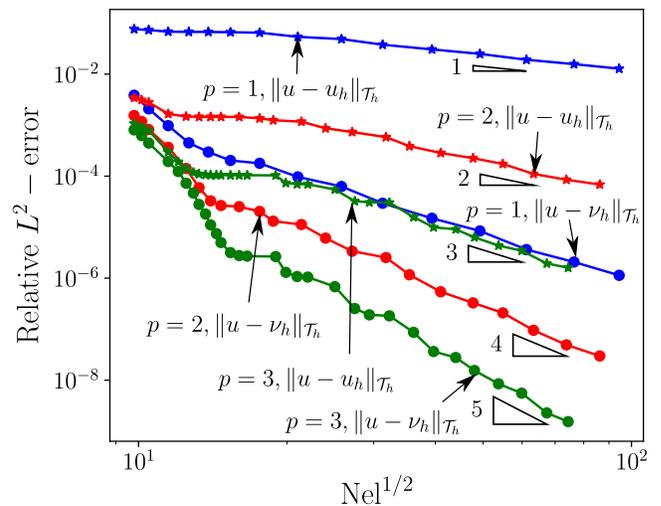}
	\captionof{figure}{Increment in order of convergence.}
	\label{fig:conv_rates_L_shaped_L2}
\end{minipage}
	
\subsection{Application to advection-diffusion problems}\label{ad_diff}
Let us consider the following advection-diffusion problem: Given $ f:\Omega\to\mathbb{R} $, $ {\boldsymbol\beta}\in\mathbb{R}^2 $, and $ u_D:\partial\Omega\to\mathbb{R} $, find $ u:\Omega\to\mathbb{R} $ such that
\begin{subequations}\label{eq:adr}
	\begin{align}
		-\Delta u+({\boldsymbol\beta}\cdot\grad)u&=f\ \ \quad\text{in }\Omega,\\
		u&=u_D\quad\text{on }\partial\Omega.
	\end{align}
\end{subequations}

\noindent
In mixed form, equation \eqref{eq:adr} becomes the system
\begin{subequations}\label{eq:adr_system}
	\begin{align}
		\bq+\grad u&={\bf 0}\ \ \quad\text{in }\Omega,\\
		\div\bq-{\boldsymbol\beta}\cdot\bq&=f\ \ \quad\text{in }\Omega,\\
		u&=u_D\quad\text{on }\partial\Omega.
	\end{align}
\end{subequations}
\noindent We set $ {\rm P}=10^3/3 $ and $ {\boldsymbol\beta}=({\rm P},{\rm P}) $. We consider $ f $ and $ u_D $ such that the exact solution to \eqref{eq:adr} is
\begin{equation*}
	u(x_1,x_2)=\left(x_1+\frac{e^{{\rm P}x_1}-1}{1-e^{\rm P}}\right)\!\!\left(x_2+\frac{e^{{\rm P}x_2}-1}{1-e^{\rm P}}\right).
\end{equation*}

We approximate the solution to \eqref{eq:adr_system} through a BDM finite element discretization. Then, we solve the residual minimization problem \eqref{eq:resmin} to postprocess the discrete solution and extract $ \|\grad\varepsilon_h\|_{\CT_h} $ to construct the improved estimator $ \eta $ (used to drive adaptive mesh refinements). Figure \ref{fig:conv_rates_ad_eta} displays the convergence curves of the error estimator $ \eta $, while Figure \ref{fig:conv_rates_ad_H1} displays the convergence curves of the full error $ (\|u-\nu_h\|_{1,h}^2+\|\bq-\bq_h\|_{0,h}^2)^{1/2} $. Despite the dominant convection regime, which usually impairs convergence, we have obtained optimal convergence rates.

\begin{figure}[h]
	\centering
	\begin{subfigure}{0.49\textwidth}
		\centering
		\includegraphics[width=\linewidth]{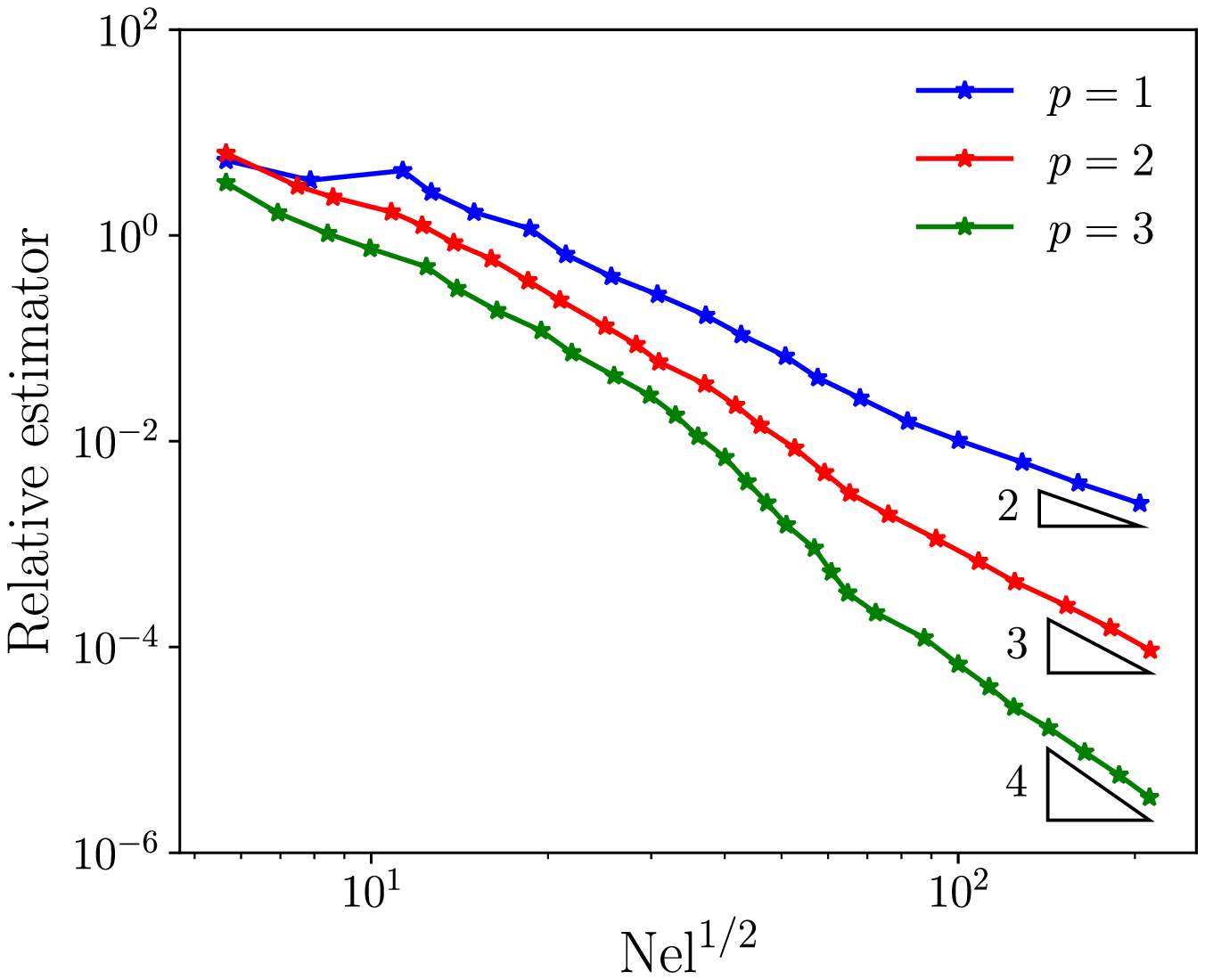}
		\caption{$ \eta $ vs. $ {\rm Nel}^{1/2} $}
		\label{fig:conv_rates_ad_eta}
	\end{subfigure}
	\hfill
	\begin{subfigure}{0.49\textwidth}
		\centering	
		\includegraphics[width=\linewidth]{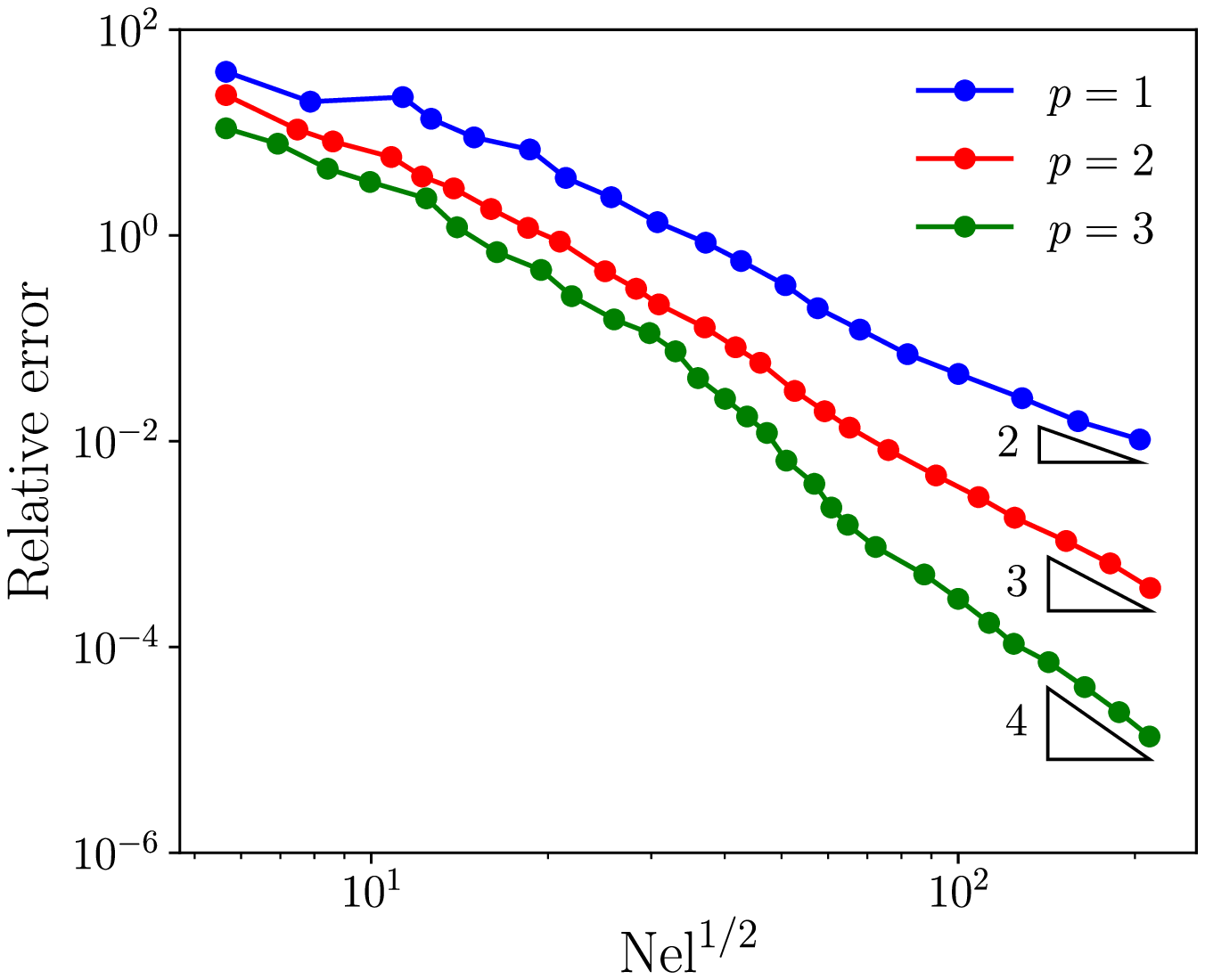}
		\caption{Full error vs. $ {\rm Nel}^{1/2} $}
		\label{fig:conv_rates_ad_H1}
	\end{subfigure}
	\caption{Convergence curves for the advection-diffusion problem.}
	\label{fig:conv_rates_ad_H1_est}
\end{figure}

On another hand, Figure \ref{fig:adaptive_meshes_ad} shows how our method delivers adaptive meshes that sharply capture the boundary layer, as we can observe from highly localized refinements around the outflow boundary.

\begin{figure}[h]
	\centering
	\begin{subfigure}{0.32\textwidth}
		\centering
		\includegraphics[width=\linewidth]{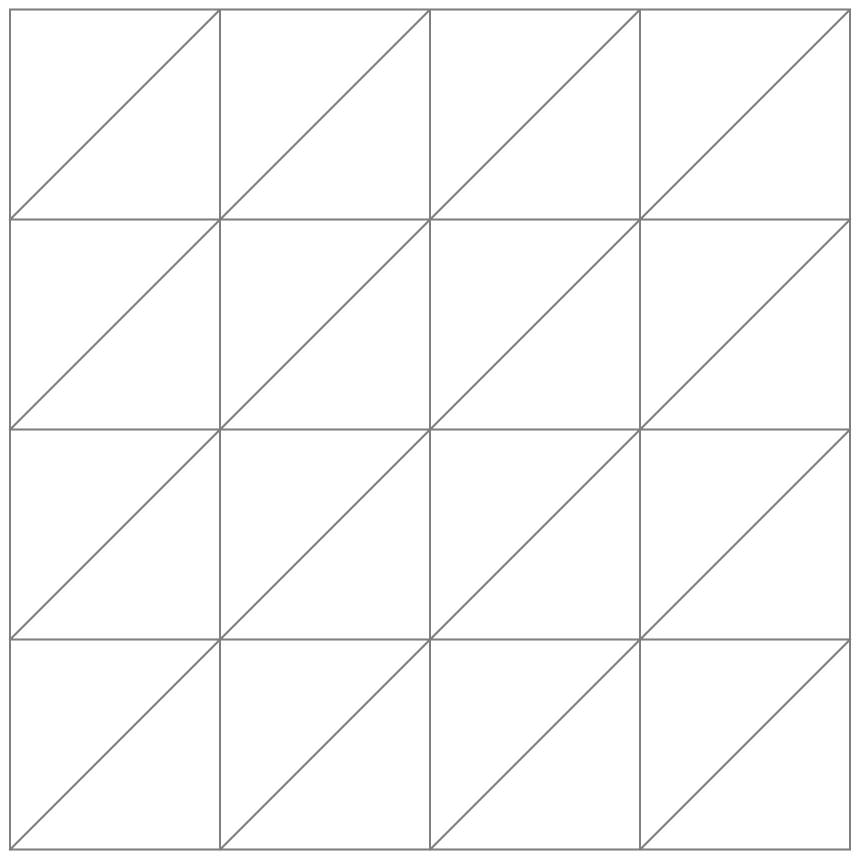}
		\caption{Initial (${\rm Nel}= 32 $)}
		\label{fig:initial_mesh_ad}
	\end{subfigure}
	\hfill
	\begin{subfigure}{0.32\textwidth}
		\centering
		\includegraphics[width=\linewidth]{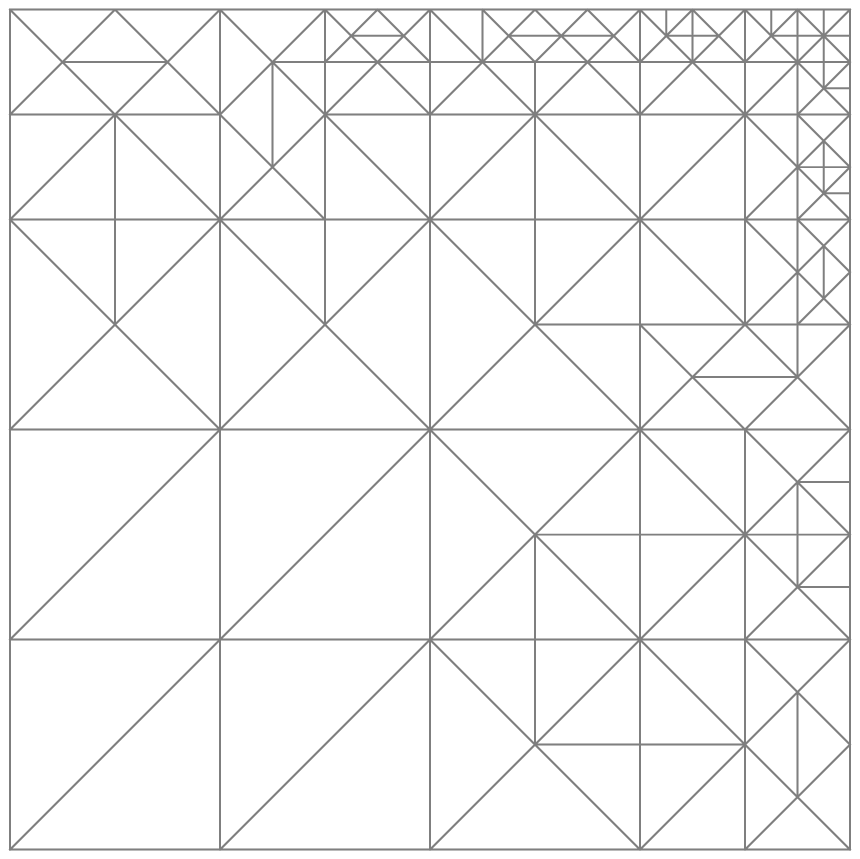}
		\caption{$ 5^{\text{th}} $ iteration (${\rm Nel}= 196 $)}
		\label{fig:mesh_5_ad}
	\end{subfigure}
	\hfill
	\begin{subfigure}{0.32\textwidth}
		\centering
		\includegraphics[width=\linewidth]{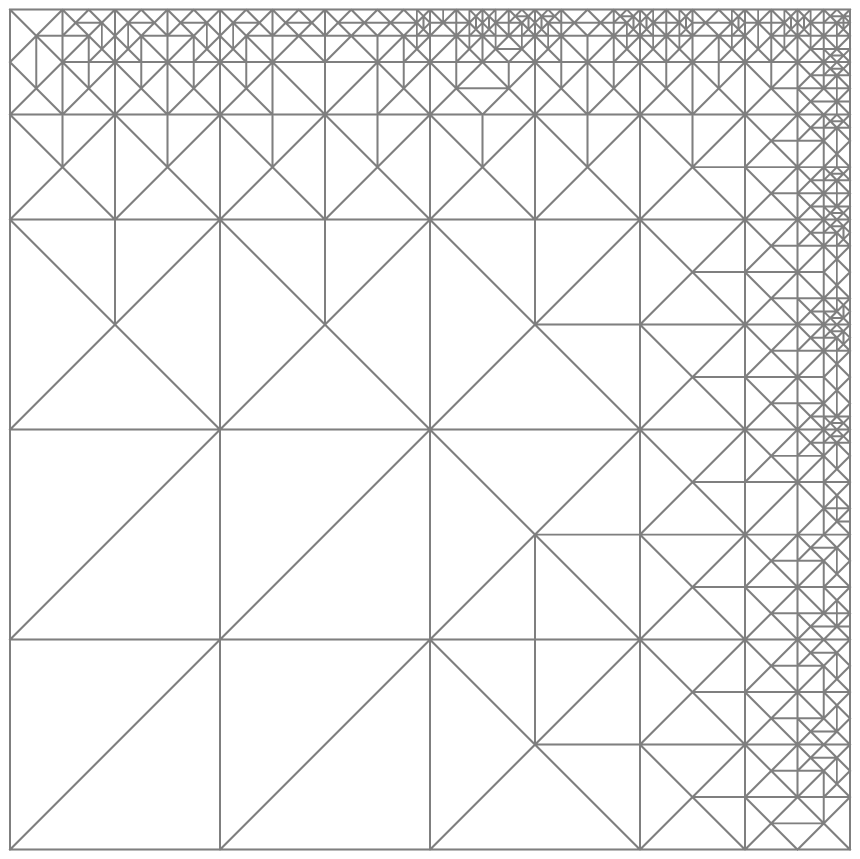}
		\caption{$ 10^{\text{th}} $ iteration (${\rm Nel}= 887 $)}
		\label{fig:mesh_10_ad}
	\end{subfigure}
	\caption{Sequence of adapted meshes for the advection-diffusion problem scenario, with $ p=3 $.}
	\label{fig:adaptive_meshes_ad}
\end{figure}

\noindent
\begin{minipage}{0.47\textwidth}\vspace{-0.5cm}
	Finally, in Figure \ref{fig:conv_rates_ad_L2} we see the contrast between the behavior of the $ L^2 $-error associated with the BDM mixed method and the postprocessing scheme. We observe the improvement in the rates of convergence for each polynomial approximation degree. In particular, notice the lack of convergence of the BDM mixed method for $ p=1 $ and $ p=2 $. which is optimally restored by our scheme. 
	As in the smooth solution experiment (see Figure \ref{fig:curves_smooth_L2}) we have that $ f\notin\CV_h^{p-1} $, so the first estimate in \eqref{apriori_minres_aux} does not hold for $ p=1 $.
\end{minipage}
\hfill
\begin{minipage}{0.48\textwidth}
	\includegraphics[width=\linewidth]{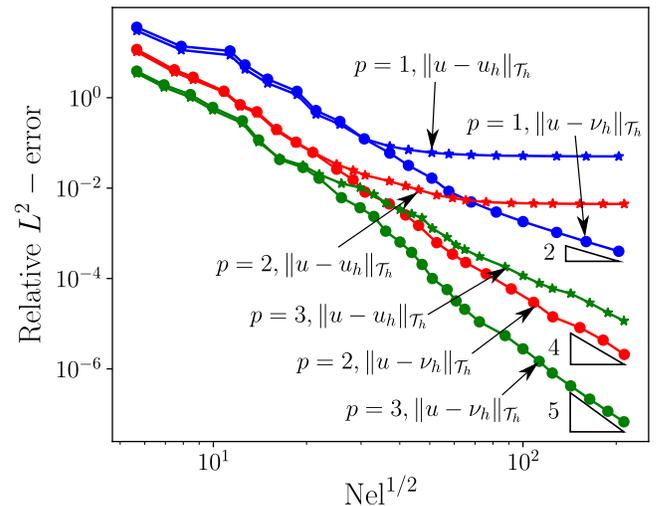}
	\captionof{figure}{Increment in order of convergence.}
	\label{fig:conv_rates_ad_L2}
\end{minipage}

\section{Conclusions}
\label{sec:conclusions}

Inspired on the postprocessing technique due to Stenberg \cite{stenberg_1991}, we have proposed an adaptive finite element method based on residual minimization, suitable for partial differential equations involving a diffusion term. The residual minimization approach is used on the postprocessing scheme associated with a mixed finite element discretization of our model problem. In particular, we have considered the BDM finite element method for simplicity. However, the extension to other mixed schemes (such as RT and HDG, to name a few), and to more complex models (such as advection-diffusion-reaction, or Stokes-like problems), is straightforward.

Our proposed method has two main advantages. First, the residual minimization approach computes a built-in residual representative, which has proven to be a reliable and efficient a posteriori error estimator in a weak norm. Second, since we perform residual minimization on a local postprocessing scheme, we can solve the underlying saddle point problem with minimal computational effort. Thus, our approach becomes attractive for problems involving different kinds of local behaviors needing adaptive mesh refinements. Our scheme provides, in addition to the residual representative, a superconvergent postprocessing of the scalar variable, which coincides with Stenberg's postprocessed solution.

We have also proposed and improved a posteriori error estimator, by adding the jump of the postprocessed solution, as well as a residual term quantifying the mismatch between the discrete flux and the gradient of the postprocessed solution. This improved estimator is proven to be reliable and efficient in a stronger norm. Moreover, like the built-in estimator, it can be used in a wide spectrum of problems involving diffusion terms, since both estimators are PDE independent.

\appendix

\section{Verification of Assumption~\ref{fortin_assumpt}}\label{appA}
For our choice of mixed FEM (i.e., BDM), we verify Assumption \ref{fortin_assumpt} for the lowest-order finite element spaces $ \CV_{*,h}^3 $ and $ \BCH_h^1 $, in dimension $ d=2 $. We start by stating the following auxiliary lemma.
\begin{lemma}
	For each $ K\in\CT_h $, let $ \{\varphi_i^K\}_{i=1}^6 $ be the set of nodal basis functions spanning $ \{\mu\in L^2(\partial K):\exists\bz\in \BCH_h^1\text{ such that }\mu=\bz|_K\cdot\bn_K\} $. There exists a (bi-orthogonal) set $ \{\psi_i^K\}_{i=1}^6\subset \CV_{*,K}^3 $ such that
	\begin{align}\label{eq:biorthog}
		\int_{\partial K}\varphi_i^K\psi_j^K=\xi_K\delta_{ij},\qquad i,j=1,\ldots,6\,,
	\end{align}
	where $ \delta_{ij} $ denotes the Kronecker delta, and $ \xi_K $ denotes a scaling constant of the element $ K $. Moreover we have the estimate:
	\begin{align}\label{eq:biorthog_bound}
		\|\psi_i^K\|_{\partial K}\lesssim\xi_K^{1/2},\qquad i=1,\ldots,6\,,
	\end{align}
	for all $ K\in\CT_h $.
\end{lemma}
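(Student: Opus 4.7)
The plan is to reduce the problem to a reference triangle $\hat K$ via the affine map $F_K:\hat K\to K$, build the bi-orthogonal family $\{\hat\psi_i\}_{i=1}^6\subset\CV_{*,\hat K}^3$ on $\hat K$ by a linear-algebra rank argument, and then pull back and rescale to recover $\{\psi_i^K\}$ and the constant $\xi_K$. Existence of the bi-orthogonal family on $\hat K$ is equivalent to surjectivity of the linear map $L:\CV_{*,\hat K}^3\to\mathbb{R}^6$ defined by $L(\hat\psi)_i:=\int_{\partial\hat K}\hat\varphi_i\hat\psi$, since a right-inverse combined with Gram inversion then produces $\{\hat\psi_j\}$ with $L(\hat\psi_j)_i=\delta_{ij}$.

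To prove surjectivity of $L$, I would first show linear independence of the six functionals $F_i(\psi):=\int_{\partial\hat K}\hat\varphi_i\psi$ on the larger space $P_3(\hat K)$: if $\mu=\sum c_i\hat\varphi_i$ satisfies $\int_{\partial\hat K}\mu\,\psi=0$ for every $\psi\in P_3(\hat K)$, testing against cubic edge-bubbles (polynomials of the form $\lambda_j\lambda_k\cdot\ell$, with $\lambda_j,\lambda_k$ the barycentric coordinates of the two edges different from $\hat F$ and $\ell$ linear) localizes the pairing to $\int_{\hat F}\mu|_{\hat F}\,\hat b=0$ for every cubic bubble $\hat b$ on $\hat F$. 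A direct $2\times 2$ determinant computation shows the $L^2(\hat F)$-pairing between $P_1(\hat F)$ and the two-dimensional space of cubic bubbles on $\hat F$ is non-degenerate, so $\mu|_{\hat F}=0$ on every edge and hence $c_i=0$. To transfer this to the codimension-one subspace $\CV_{*,\hat K}^3$, note that any nonzero combination $\sum c_iF_i$ vanishing on $\CV_{*,\hat K}^3$ would agree on $P_3(\hat K)$ with a scalar multiple $\lambda\,(\psi\mapsto\int_{\hat K}\psi)$; testing this identity against two distinct cubic bubbles on the same edge and invoking positivity of $\int_{\hat K}\hat b$ for the standard bubbles eliminates $\lambda$ and reduces us to the previous case. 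With $\{\hat\psi_i\}\subset\CV_{*,\hat K}^3$ secured, define $\psi_i^K:=\hat\psi_i\circ F_K^{-1}\in\CV_{*,K}^3$. On a shape-regular mesh every boundary-edge Jacobian ratio $|F|/|\hat F|$ is comparable to $h_K$, so a uniform rescaling (absorbing a shape-regularity constant) collapses the three different edge factors into a single $\xi_K\sim h_K$, yielding \eqref{eq:biorthog}. The bound \eqref{eq:biorthog_bound} then follows from the standard face-scaling identity $\|\psi_i^K\|_F^2=(|F|/|\hat F|)\,\|\hat\psi_i\|_{\hat F}^2\lesssim h_K$, summed over the three boundary edges of $K$ and compared against $\xi_K\sim h_K$.

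The chief obstacle is the second half of the rank argument: transferring linear independence from $P_3(\hat K)$ (easily obtained by edge-bubble testing) to the zero-mean subspace $\CV_{*,\hat K}^3$, since the mean constraint introduces a $\lambda$-dependent correction in the pairing that must be killed by a second independent bubble. A secondary, purely book-keeping nuisance is producing a single scalar $\xi_K$ on an element with three possibly different edge lengths; shape regularity absorbs this into the hidden constant in $\lesssim$.
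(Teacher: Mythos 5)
Your construction follows the same broad route as the paper's: work on the reference simplex $\widehat{K}$, use cubics whose boundary trace is supported on a single edge, and finish by an affine pull-back and scaling. The difference is that the paper builds the dual functions explicitly, edge by edge, as combinations of $\widehat{\lambda}_j\widehat{\lambda}_k$, $\widehat{\lambda}_j^2\widehat{\lambda}_k$, $\widehat{\lambda}_j\widehat{\lambda}_k^2$ solving a $3\times3$ system (two biorthogonality conditions plus the zero-mean condition), whereas you argue by duality that the six functionals $F_i:\psi\mapsto\int_{\partial\widehat{K}}\widehat{\varphi}_i\psi$ are linearly independent on $\CV_{*,\widehat{K}}^3$. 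Your first step --- independence on all of $\CV_{\widehat{K}}^3$ by testing against the two-dimensional space of cubic edge bubbles on each edge --- is correct.

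The transfer to the zero-mean subspace, however, has a genuine gap. Suppose $\sum_i c_iF_i=\lambda M$ on $\CV_{\widehat{K}}^3$ with $M(\psi)\eq\int_{\widehat{K}}\psi$. Testing with two edge bubbles $b_1,b_2$ on the same edge and eliminating $\lambda$ leaves, as effective test function, only the one-dimensional span of $M(b_2)b_1-M(b_1)b_2$ (the zero-mean combination): one constraint per edge instead of two, so you are \emph{not} reduced to the previous case. Concretely, with $b_1=\widehat{\lambda}_j^2\widehat{\lambda}_k$ and $b_2=\widehat{\lambda}_j\widehat{\lambda}_k^2$ one has $M(b_1)=M(b_2)$, the surviving test function is odd about the edge midpoint, and the surviving constraint only forces $\mu|_{\widehat{F}}$ to be constant on each edge; the one-parameter family $\mu|_{\widehat{F}}\equiv\lambda/(5|\widehat{F}|)$ passes every test you have used. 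The fix is cheap: the interior bubble $\widehat{\lambda}_1\widehat{\lambda}_2\widehat{\lambda}_3$ has vanishing trace and positive mean, so testing the identity on $\CV_{\widehat{K}}^3$ against it gives $0=\lambda M(\widehat{\lambda}_1\widehat{\lambda}_2\widehat{\lambda}_3)$, hence $\lambda=0$, and then your first step applies verbatim. Equivalently, since that bubble spans a complement of $\CV_{*,\widehat{K}}^3$ in $\CV_{\widehat{K}}^3$ and has zero trace, the two spaces have the same boundary trace space, so surjectivity of $L$ transfers for free. Finally, on the scaling: you rightly note that the three edges carry different Jacobian factors $|F|/|\widehat{F}|$ (a point the paper glosses over with $\xi_K=|\partial K|/|\partial\widehat{K}|$), but to get the \emph{exact} equality \eqref{eq:biorthog} you should rescale each $\psi_i^K$ by the factor of its own edge rather than ``absorb a constant into $\lesssim$''; shape regularity then still yields \eqref{eq:biorthog_bound}.
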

\begin{proof}
	Consider the reference simplex $ \widehat{K}\eq\{(s,t)\in\mathbb{R}^2:s\in[0,1],t\in[0,1-s]\} $, and let $ \{\widehat{\varphi}_i^{\widehat{K}}\}_{i=1}^6 $ be defined by $ \widehat{\varphi}_i^{\widehat{K}}=\varphi_i^K\circ \CF_K $, where $ \CF_K:\widehat{K}\to K $ is the affine mapping satisfying $ K=\CF_K(\widehat{K}) $. We will construct $ \{\widehat{\psi}_i^{\widehat{K}}\}_{i=1}^6 $ such that
	\begin{align}\label{biort_ref}
		\int_{\partial\widehat{K}}\widehat{\varphi}_i^{\widehat{K}}\widehat{\psi}_j^{\widehat{K}}=\delta_{ij},\qquad i,j=1,\ldots,6,
	\end{align}
	and thus we will define $ \psi_i^K\eq\widehat{\psi}_i^{\widehat{K}}\circ \CF_K^{-1} $.
	
	Without loss of generality, let $ \widehat{\varphi}_1^{\widehat{K}} $, $ \widehat{\varphi}_2^{\widehat{K}} $ be the nodal basis functions associated with the face $ \widehat{F}_1\eq\{(s,0)\in\mathbb{R}^2:s\in[0,1]\} $; let $ \widehat{\varphi}_3^{\widehat{K}} $, $ \widehat{\varphi}_4^{\widehat{K}} $ be the nodal basis functions associated with the face $ \widehat{F}_2\eq\{(1-s,s)\in\mathbb{R}^2:s\in[0,1]\} $; and let $ \widehat{\varphi}_5^{\widehat{K}} $, $ \widehat{\varphi}_6^{\widehat{K}} $ be the nodal basis functions associated with the face $ \widehat{F}_3\eq\{(0,s)\in\mathbb{R}^2:s\in[0,1]\} $. We are going to exhibit three functions $ \{\widehat{\phi}_j\}_{j=1}^3\subset\CV_{*,\widehat{K}}^3 $\,, such that
	\begin{equation}\label{biort}
		\widehat{\phi}_j|_{\widehat{F}_k}=0,\qquad\text{for }j=1,2,3,\quad\text{and}\quad k=2,3.
	\end{equation}
	\noindent Indeed, for $ \widehat{\lambda}_1(\widehat{x}_1,\widehat{x}_2)\eq 1-\widehat{x}_1-\widehat{x}_2 $ and $ \widehat{\lambda}_2(\widehat{x}_1,\widehat{x}_2)\eq \widehat{x}_1 $, we set $ \widehat{\phi}_1\eq\widehat{\lambda}_1\widehat{\lambda}_2 $, $ \widehat{\phi}_2\eq\big(\widehat{\lambda}_1\big)^{\!2}\ \!\widehat{\lambda}_2 $, and $ \widehat{\phi}_3\eq\widehat{\lambda}_1\big(\widehat{\lambda}_2\big)^{\!2} $. We propose to find linear combinations
	\begin{equation}\label{linear}
		\widehat{\psi}_1^{\widehat{K}}\eq\beta_1\widehat{\phi}_1+\beta_2\widehat{\phi}_2+\beta_3\widehat{\phi}_3\qquad\text{and}\qquad\widehat{\psi}_2^{\widehat{K}}\eq\gamma_1\widehat{\phi}_1+\gamma_2\widehat{\phi}_2+\gamma_3\widehat{\phi}_3,
	\end{equation}
	such that
	\begin{equation}\label{biort_ref_12}
		\int_{\widehat{F}_1}\widehat{\varphi}_i^{\widehat{K}}\widehat{\psi}_j^{\widehat{K}}=\delta_{ij},\qquad i,j=1,2,
	\end{equation}
	and
	\begin{equation}\label{zeromean}
		\int_{\widehat{K}}\widehat{\psi}_j^{\widehat{K}}=0,\qquad j=1,2.
	\end{equation}
	Notice that \eqref{biort} and \eqref{biort_ref_12} guarantee \eqref{biort_ref}.
 
	We can condensate \eqref{linear}, \eqref{biort_ref_12} and \eqref{zeromean} as the linear systems
	\begin{align*}
		\BA\boldsymbol{\beta}=\boldsymbol{e}_1\qquad\text{and}\qquad\BA\boldsymbol{\gamma}=\boldsymbol{e}_2,
	\end{align*}
	where $ \boldsymbol{\beta}\eq(\beta_1,\beta_2,\beta_3) $, $ \boldsymbol{\gamma}\eq(\gamma_1,\gamma_2,\gamma_3) $, $ \boldsymbol{e}_1=(1,0,0) $, $ \boldsymbol{e}_2=(0,1,0) $, and the coefficients of $ \BA $ are defined by $$ A_{ij}\eq\begin{cases}
		\int_{\widehat{F}_1}\widehat{\varphi}_i^{\widehat{K}}\widehat{\phi}_j\,,& i=1,2\,;\ j=1,2,3\\
		\int_{\widehat{K}}\widehat{\phi}_j\,,& i=3\,;\ j=1,2,3.
	\end{cases}\,. $$
	We have verified that $ \BA $ is invertible, so we can determine $\boldsymbol{\beta}$ and $\boldsymbol{\gamma}$ uniquely, completing the construction in \eqref{linear}.

	Thus, we obtain \eqref{biort_ref} after constructing $ \big\{\widehat{\psi}_3^{\widehat{K}},\widehat{\psi}_4^{\widehat{K}}\big\} $ and $ \big\{\widehat{\psi}_5^{\widehat{K}},\widehat{\psi}_6^{\widehat{K}}\big\} $ with the same arguments.
	Hence, \eqref{eq:biorthog} follows from \eqref{biort_ref} by standard scaling arguments with $ \xi_K\eq|\partial K|/|\partial\widehat{K}| $.
	Finally (see \cite[Lemma 1.101]{ern2013theory}),
	\begin{align*}
		\|\psi_i^K\|_{\partial K}\lesssim\xi_K^{1/2}\|\widehat{\psi}_i^{\widehat{K}}\|_{\partial\widehat{K}}\lesssim\xi_K^{1/2},\qquad i=1,\ldots,6,
	\end{align*}
	and \eqref{eq:biorthog_bound} follows.
\end{proof}

Now, we construct the Fortin operator. Let us consider, for each $ K\in\CT_h $, the operator $ \Pi_{\partial K}:\gamma_K(H_*^1(K))\to \gamma_K(\CV_{*,K}^{p+2}) $ defined by
\begin{align*}
	\Pi_{\partial K}v\eq\sum_{j=1}^6\alpha_j^K\!(v)\,\psi_j^K,
\end{align*}
where $ \alpha_j^K\!(v)\in\mathbb{R}$ is chosen to satisfy
\begin{align*}
	\int_{\partial K}\varphi_i^K\Pi_{\partial K}v=\int_{\partial K}\varphi_i^Kv,\qquad i=1,\ldots,6.
\end{align*}
Solving for $ \alpha_i^K\!(v) $, and thanks to \eqref{eq:biorthog}, we obtain
\begin{align*}
	\alpha_i^K\!(v)=\frac{1}{\xi_K}\int_{\partial K}\varphi_i^K v,\qquad i=1,\ldots,6.
\end{align*}
Moreover, we have
\begin{align*}
	|\alpha_i^K\!(v)|\leq\frac{1}{\xi_K}\|\varphi_i^K\|_{\partial K}\|v\|_{\partial K}\leq\frac{|\partial K|^{1/2}}{\xi_K}\|v\|_{\partial K},\qquad i=1,\ldots,6.
\end{align*}
Thus, thanks to \eqref{eq:biorthog_bound} and since $ \xi_K=|\partial K|/|\partial\widehat{K}| $, we get
\begin{align*}
	\|\Pi_{\partial K}v\|_{\partial K}^2\lesssim\sum_{j=1}^6|\alpha_j^K\!(v)|^2\|\psi_j^K\|_{\partial K}^2\lesssim\frac{|\partial K|}{\xi_K}\|v\|_{\partial K}^2=|\partial\widehat{K}|\|v\|_{\partial K}^2\lesssim\|v\|_{\partial K}^2.
\end{align*}

\section{Proofs of Theorems \ref{rel} and \ref{rel_improved}}\label{appendix_rel}
\subsection{Auxiliary estimates}

We will prove here two auxiliary lemmas needed for the proofs of Theorems \ref{rel} and \ref{rel_improved}. Let $ \gamma_K $ be the Dirichlet trace operator considered in Assumption \ref{fortin_assumpt}. We start with an inequality for polynomials lying in the complementary subspace of the kernel of the restricted trace operator $ \gamma_K|_{\CV_{*,K}^{p+2}}$\,, for each $ K\in\CT_h $.

\begin{lemma}\label{aux_ineq}
	Given $ K\in\CT_h $, let $ \CV_{\partial K}\subset \CV_{*,K}^{p+2} $ be the complementary subspace of $ \ker\!\big(\gamma_K|_{\CV_{*,K}^{p+2}}\big) $. Then
	\begin{align}
		\|\grad v_K\|_K\lesssim h_K^{-1/2}\|v_K\|_{\partial K}\qquad\forall v_K\in \CV_{\partial K}.
	\end{align}
\end{lemma}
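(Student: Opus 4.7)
The plan is to reduce to a dimensionless inequality on the reference simplex via affine scaling, and then exploit finite dimensionality together with the defining property of the complementary subspace. Let $\CF_K:\widehat K\to K$ be the affine map with $K=\CF_K(\widehat K)$ (as already used in Appendix~\ref{appA}), and for $v_K\in\CV_{*,K}^{p+2}$ set $\widehat v\eq v_K\circ\CF_K$. Pulling $\CV_{\partial K}$ back through $\CF_K$ produces a complementary subspace $\CV_{\partial\widehat K}\subset\CV_{*,\widehat K}^{p+2}$ of $\ker(\gamma_{\widehat K}|_{\CV_{*,\widehat K}^{p+2}})$; by taking $\CV_{\partial K}$ to be the push-forward of a fixed such $\CV_{\partial\widehat K}$, all constants coming from the reference element become independent of $K$.

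Next, I would record the standard shape-regularity scalings
\begin{equation*}
\|\grad v_K\|_K\simeq h_K^{(d-2)/2}\,\|\grad\widehat v\|_{\widehat K}\,,\qquad
\|v_K\|_{\partial K}\simeq h_K^{(d-1)/2}\,\|\widehat v\|_{\partial\widehat K}\,,
\end{equation*}
so the inequality to be shown is equivalent to the $K$-independent bound $\|\grad\widehat v\|_{\widehat K}\lesssim\|\widehat v\|_{\partial\widehat K}$ on $\CV_{\partial\widehat K}$. To obtain this, observe that $\widehat v\mapsto\|\widehat v\|_{\partial\widehat K}$ is an actual norm on $\CV_{\partial\widehat K}$: if $\|\widehat v\|_{\partial\widehat K}=0$ then $\widehat v\in\ker(\gamma_{\widehat K}|_{\CV_{*,\widehat K}^{p+2}})$, and since $\widehat v$ also lies in the complementary subspace $\CV_{\partial\widehat K}$, it must vanish. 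Because $\CV_{\partial\widehat K}$ is finite dimensional, this norm is equivalent to any seminorm on the space, in particular $\|\grad(\cdot)\|_{\widehat K}$, yielding the desired reference inequality. Substituting back through the two scaling relations collapses $h_K^{(d-2)/2}\cdot h_K^{-(d-1)/2}$ to the advertised factor $h_K^{-1/2}$.

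The only delicate step is the first one, namely making sure that the reference complement $\CV_{\partial\widehat K}$ can be chosen once and for all so that the norm-equivalence constant does not depend on $K$. As noted above, this is handled by defining $\CV_{\partial K}$ as the push-forward of a fixed $\CV_{\partial\widehat K}$; alternatively, since any two complements of a fixed subspace in a finite-dimensional space are related by a bounded isomorphism, one may start from any intrinsic choice (for instance, the $L^2(K)$-orthogonal complement) and absorb the ensuing bounded change of basis into the hidden constant, using shape-regularity to keep it uniform in~$K$. Everything else is bookkeeping with the usual scaling identities.
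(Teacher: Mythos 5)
Your proof is correct and follows essentially the same route as the paper: pass to the reference simplex, observe that $\|\cdot\|_{\partial\widehat K}$ is a genuine norm on the finite-dimensional complement $\CV_{\partial\widehat K}$ and therefore dominates the seminorm $\|\grad(\cdot)\|_{\widehat K}$ there, and recover the $h_K^{-1/2}$ factor from the standard scaling identities. Your additional care in fixing the complement once on the reference element (so the equivalence constant is $K$-independent) is a detail the paper leaves implicit, but the substance of the argument is identical.
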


\begin{proof}
	Let $ \widehat{K} $ be the reference simplex. Since $ \|\cdot\|_{\partial\widehat{K}} $ and $ \|\grad\cdot\|_{\widehat{K}} $ are norms in the finite-dimensional vector space $ \CV_{\partial\widehat{K}}\subset \CV_{*,\widehat{K}}^{p+2} $, they are equivalent norms. Then, we have $ \|\grad v_{\widehat{K}}\|_{\widehat{K}}\lesssim\|v_{\widehat{K}}\|_{\partial\widehat{K}} $. The result follows from a standard scaling arguments (see \cite[Lemma 1.101]{ern2013theory}).
\end{proof}

\begin{lemma}\label{lemma_rel}
	Let $(u,\bq) \in \CV \times \BCH$ be the solution of the continuous problem~\eqref{eq:model_problem}; let $(u_h,\bq_h) \in \CV_h^{p-1} \times \BCH_h^p$ be the solution of the discrete problem~\eqref{eq:discrete_formulation}; let $\theta_h \in \CV_{*,h}^{p+2}$
	be the solution of \eqref{eq:aux_problem}; and let $(\varepsilon_h,\nu_h)\in \CV_{*,h}^{p+2}\times \CV_h^{p+1} $ be the solution of \eqref{eq:saddlepoint}. Then, for all $ K\in\CT_h $, the following holds true:
	\begin{align}\label{eq:reliability_prev}
		\|\grad\!\left(\theta_h-\nu_h \right)\!\|_K&= \widetilde{\eta}_K,\\
		\label{eq:reliability_prev2}
		\|\bq-\bq_h\|_{*,K}&\leq \widetilde{\eta}_K+\|\grad(u-\nu_h)\|_K.\\
		\intertext{Moreover, if Assumption \ref{fortin_assumpt} is satisfied, then}
		\label{eq:reliability_prev3}
		h_K^{1/2}\|(\bq-\bq_h)\cdot\bn\|_{\partial K}&\lesssim \widetilde{\eta}_K+\|\grad(u-\nu_h)\|_K+{\rm osc}_K(\bq),
	\end{align}
	where $ {\rm osc}_K(\bq) $ is defined in \eqref{osc}.
\end{lemma}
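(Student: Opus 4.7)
For \eqref{eq:reliability_prev}, the plan is to compare the two local systems directly. Subtracting the defining equation of $\theta_h$ in \eqref{eq:aux_problem} from the first saddle-point equation \eqref{eq:saddlepoint_a} shows that, on each $K\in\CT_h$, the polynomial $w_K\eq\theta_h|_K-\nu_K-\varepsilon_K\in\CV_K^{p+2}$ is $\grad$-orthogonal to every test function in $\CV_{*,K}^{p+2}$. Since $w_K-Q_K w_K\in\CV_{*,K}^{p+2}$ and $Q_K w_K$ is constant, testing against $v_K=w_K-Q_K w_K$ forces $\|\grad w_K\|_K^2=0$, hence $\grad\theta_h=\grad\nu_h+\grad\varepsilon_K$ on $K$, which is \eqref{eq:reliability_prev}.

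For \eqref{eq:reliability_prev2}, the idea is to exploit $\bq=-\grad u$ together with \eqref{eq:saddlepoint_a} to rewrite, for every $v_K\in\CV_{*,K}^{p+2}$,
\begin{equation*}
(\bq-\bq_h,\grad v_K)_K=-(\grad u+\bq_h,\grad v_K)_K=(\grad\varepsilon_K,\grad v_K)_K-(\grad(u-\nu_h),\grad v_K)_K .
\end{equation*}
Cauchy--Schwarz, division by $\|\grad v_K\|_K$, and the supremum defining $\|\cdot\|_{*,K}$ in \eqref{dual_discreta} then deliver \eqref{eq:reliability_prev2}.

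The third estimate \eqref{eq:reliability_prev3} is the most involved. I would represent $\|(\bq-\bq_h)\cdot\bn\|_{\partial K}$ by $L^2(\partial K)$-duality and split every test function as $v=(v-\Pi_{\partial K}v)+\Pi_{\partial K}v$. Orthogonality \eqref{Fortin_orthog} applied to $\bq_h\in\BCH_h^p$ cancels $\bq_h$ from the first piece, leaving $(\bq\cdot\bn,v-\Pi_{\partial K}v)_{\partial K}$, which is bounded by $h_K^{-1/2}\,{\rm osc}_K(\bq)\|v\|_{\partial K}$ directly from~\eqref{osc}. For the second piece I would lift $\Pi_{\partial K}v$ to a polynomial $\tilde v_K$ in the complementary subspace $\CV_{\partial K}\subset\CV_{*,K}^{p+2}$ of Lemma~\ref{aux_ineq}, integrate by parts on $K$, and recycle the identity established for \eqref{eq:reliability_prev2} on the interior inner product $(\bq-\bq_h,\grad\tilde v_K)_K$. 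Combining Lemma~\ref{aux_ineq} with the $L^2(\partial K)$-boundedness~\eqref{Fortin} of $\Pi_{\partial K}$ supplies the crucial inverse scaling $\|\grad\tilde v_K\|_K\lesssim h_K^{-1/2}\|v\|_{\partial K}$ needed to assemble the three contributions.

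The main obstacle is the leftover volumetric term $(\div(\bq-\bq_h),\tilde v_K)_K$ produced by the integration by parts in the previous step. For BDM, $\div\bq_h=Q_K^{p-1}f$ holds in $K$, so this reduces to the standard $L^2$ data oscillation $(f-Q_K^{p-1}f,\tilde v_K)_K$. Using the zero-mean property of $\tilde v_K$, a Poincar\'e inequality on $K$, and once more Lemma~\ref{aux_ineq}, one sees that $h_K^{1/2}$ times this contribution is dominated by the higher-order quantity $h_K\|f-Q_K^{p-1}f\|_K$, which is absorbed into the $\mathrm{osc}$ notation of the main theorem (and vanishes identically whenever $f\in\CV_h^{p-1}$, as remarked after Theorem~\ref{estimates_mixed}).
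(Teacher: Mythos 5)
Your proposal follows essentially the same route as the paper's proof for all three estimates: \eqref{eq:reliability_prev} by comparing \eqref{eq:aux_problem} with \eqref{eq:saddlepoint_a} (your variant, testing with $(I-Q_K)w_K$ to get the pointwise identity $\grad(\theta_h-\nu_h)=\grad\varepsilon_K$ on $K$, is a slightly stronger but equivalent form of the paper's sup-characterization argument); \eqref{eq:reliability_prev2} by the identity $(\bq-\bq_h,\grad v_K)_K=(\grad\varepsilon_K,\grad v_K)_K-(\grad(u-\nu_h),\grad v_K)_K$ and the definition \eqref{dual_discreta}; and \eqref{eq:reliability_prev3} by duality on $L^2(\partial K)$, the splitting $v=(v-\Pi_{\partial K}v)+\Pi_{\partial K}v$, the orthogonality \eqref{Fortin_orthog}, and the inverse-trace bound of Lemma~\ref{aux_ineq}. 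The one substantive point of divergence is in \eqref{eq:reliability_prev3}: the paper passes from $\langle(\bq-\bq_h)\cdot\bn,v_K\rangle_{\partial K}$ directly to the interior identity above, i.e.\ it silently discards the volume term $(\ddiv(\bq-\bq_h),v_K)_K$ produced by Green's formula, which vanishes only when $f\in\CV_h^{p-1}$ (so that $\ddiv\bq_h=Q_K^{p-1}f=f=\ddiv\bq$ for BDM). You retain that term, correctly identify it as the data oscillation $(f-Q_K^{p-1}f,\tilde v_K)_K$, and bound its contribution by $h_K\|f-Q_K^{p-1}f\|_K$ via Poincar\'e and Lemma~\ref{aux_ineq} --- which is more careful than the source. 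The only caveat is your closing claim that this quantity is ``absorbed into the $\mathrm{osc}$ notation'': as defined in \eqref{osc}, ${\rm osc}_K(\bq)$ is purely a boundary-flux oscillation and does not dominate $h_K\|f-Q_K^{p-1}f\|_K$, so strictly your argument proves \eqref{eq:reliability_prev3} with that additional (higher-order) term appended to the right-hand side, or under the extra hypothesis $f\in\CV_h^{p-1}$. This is a blemish in the statement as written rather than a gap in your reasoning, but it should be flagged explicitly rather than folded into ${\rm osc}_K(\bq)$.
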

\begin{proof}
	Using the first equation of \eqref{eq:saddlepoint}, we get
	\begin{align*}
		(\grad (\theta_h-\nu_h), \grad v_K)_K&=-(\grad\nu_h,\grad v_K)_K+(\grad\theta_h,\grad v_K)_K\\
		&=(\grad\varepsilon_h,\grad v_K)_K
		+(\bq_h,\grad v_K)_K+(\grad\theta_h,\grad v_K)_K\\
		&=(\grad\varepsilon_h,\grad v_K)_K
	\end{align*}
	for all $ v_K\in \CV_{*,K}^{p+2} $.
	Thus,  we have
	\begin{align*}
		\|\grad(\theta_h-\nu_h)\|_K&=\sup_{v_K\in \CV_{*,K}^{p+2}\setminus\{0\}}\frac{(\grad(\theta_h-\nu_h),\grad v_K)_K}{\|\grad v_K\|_K}=\sup_{v_K\in \CV_{*,K}^{p+2}\setminus\{0\}}\frac{(\grad\varepsilon_h,\grad v_K)_K}{\|\grad v_K\|_K}\\
		&=\|\grad\varepsilon_K\|_K=\widetilde{\eta}_K,
	\end{align*}
	which proves \eqref{eq:reliability_prev}. Now, we deal with the flux error. For each $ K\in\CT_h $, we recall that
	\begin{align*}
		\|\bq-\bq_h\|_{*,K}\eq\sup_{v_K\in \CV_{*,K}^{p+2}}\frac{(\bq-\bq_h,\grad v_K)_K}{\|\grad v_K\|_K},
	\end{align*}
	and thus, since
	\begin{align*}
		(\bq-\bq_h, \grad v_K)_K
		&=(\grad\varepsilon_h,\grad v_K)_K
		-(\grad(u-\nu_h),\grad v_K)_K
	\end{align*}
	for all $ v_K\in \CV_{*,K}^{p+2} $, we have
	\begin{align*}
		\|\bq-\bq_h\|_{*,K}\leq\|\grad\varepsilon_h\|_K+\|\grad(u-\nu_h)\|_K=\widetilde{\eta}_K+\|\grad(u-\nu_h)\|_K\,,
	\end{align*}
	for all $ K\in\CT_h $, which proves \eqref{eq:reliability_prev2}. To prove \eqref{eq:reliability_prev3} notice that, for each $ K\in\CT_h $,
	\begin{align*}
		\|(\bq-\bq_h)\cdot\bn\|_{\partial K}=\sup_{v\in L^2(\partial K)\setminus\{0\}}\frac{\langle(\bq-\bq_h)\cdot\bn,v\rangle_{\partial K}}{\|v\|_{\partial K}}.
	\end{align*}
	
	\noindent Thus, we get
	\begin{align*}
		\|(\bq-\bq_h)\cdot\bn\|_{\partial K}&=\sup_{v\in L^2(\partial K)\setminus\{0\}}\frac{\langle(\bq-\bq_h)\cdot\bn,v-\Pi_{\partial K}v+\Pi_{\partial K}v\rangle_{\partial K}}{\|v\|_{\partial K}}\\
		&\leq\sup_{v\in L^2(\partial K)\setminus\{0\}}\frac{\langle(\bq-\bq_h)\cdot\bn,v-\Pi_{\partial K}v\rangle_{\partial K}}{\|v\|_{\partial K}}\\
		&\quad+\sup_{v\in L^2(\partial K)\setminus\{0\}}\frac{\langle(\bq-\bq_h)\cdot\bn,\Pi_{\partial K}v\rangle_{\partial K}}{\|v\|_{\partial K}}\\
		&\leq\sup_{v\in L^2(\partial K)\setminus\{0\}}\frac{\langle(\bq-\bq_h)\cdot\bn,v-\Pi_{\partial K}v\rangle_{\partial K}}{\|v\|_{\partial K}}\\
		&\quad+C_{\Pi}\left(\sup_{v\in L^2(\partial K)\setminus\{0\}}\frac{\langle(\bq-\bq_h)\cdot\bn,\Pi_{\partial K}v\rangle_{\partial K}}{\|\Pi_{\partial K}v\|_{\partial K}}\right)\\
		&\leq h_K^{-1/2}{\rm osc}_K(\bq)+C_{\Pi}\left(\sup_{v_K\in\gamma_K\!\big(\CV_{*,K}^{p+2}\big)\setminus\{0\}}\frac{\langle(\bq-\bq_h)\cdot\bn,v_K\rangle_{\partial K}}{\|v_K\|_{\partial K}}\right)\\
		&\leq h_K^{-1/2}{\rm osc}_K(\bq)+C_{\Pi}\left(\sup_{v_K\in\gamma_K\!\big( \CV_{*,K}^{p+2}\big)\setminus\{0\}}\frac{\langle(\bq-\bq_h)\cdot\bn,v_K\rangle_{\partial K}}{h_K^{1/2}\|\grad v_K\|_K}\right),
	\end{align*}
	thanks to \eqref{Fortin}, and Lemma \ref{aux_ineq} observing that $ \gamma_K\big(\CV_{*,K}^{p+2}\big)\setminus\{0\}=\gamma_K\big(\CV_{\partial K}\big)\setminus\{0\} $. Finally, since
	\begin{align*}
		\langle(\bq-\bq_h)\cdot\bn,v_K\rangle_{\partial K}&=(\grad\varepsilon_K,\grad v_K)_K-(\grad(u-\nu_h),\grad v_K)_K\\
		&\leq\big(\widetilde{\eta}_K+\|\grad(u-\nu_h)\|_K\big)\|\grad v_K\|_K,
	\end{align*}
	for all $ v_K\in \CV_{*,K}^{p+2} $, we can conclude that
	\begin{align*}
		h_K^{1/2}\|(\bq-\bq_h)\cdot\bn\|_{\partial K}\leq {\rm osc}_K(\bq)+C_{\Pi}\big(\widetilde{\eta}_K+\|\grad(u-\nu_h)\|_K\big).
	\end{align*}
\end{proof}

\subsection{Proof of Theorem \ref{rel}}\label{append:rel}
Now, we are ready to deal with the proof of the reliability estimate associated to the residual representative $ \varepsilon_h $.
\begin{proof}
	Thanks to \eqref{eq:reliability_prev} and after summing over all $ K\in\CT_h $, we get
	\begin{equation}\label{rel_potential_aux}
		\|\grad\!\left(\theta_h-\nu_h \right)\!\|_{\CT_h}=\widetilde{\eta}.
	\end{equation}
	After combining \eqref{rel_potential_aux} with Assumption \ref{eq:saturation} and the triangle inequality, we obtain
	\begin{equation}\label{rel_potential}
		\|\grad\!\left(u-\nu_h \right)\!\|_{\CT_h}\leq\frac{1}{1-\delta}\widetilde{\eta}.
	\end{equation}
	The result follows from \eqref{rel_potential}, and adding \eqref{eq:reliability_prev2} with \eqref{eq:reliability_prev3} over all $ K\in\CT_h $.
\end{proof}

\subsection{Proof of Theorem \ref{rel_improved}}\label{append:rel_improved} To conclude, we present the proof of the reliability estimate for our improved estimator $ \eta $.
\begin{proof}
	For each $ K\in\CT_h $, we have
	\begin{align}\label{eq:reliability_prev_alt}
		\|\grad \left(\theta_h-\nu_h \right)\|_K&\leq \eta_K,\\
		\label{eq:reliability_prev3_alt}
		h_K^{1/2}\|(\bq-\bq_h)\cdot\bn\|_{\partial K}&\lesssim \eta_K+\|\grad(u-\nu_h)\|_K+{\rm osc}_K(\bq),
	\end{align}
	thanks to \eqref{eq:reliability_prev}, \eqref{eq:reliability_prev3}, and the fact that $ \widetilde{\eta}_K\leq\eta_K $. Additionally, since  $ \|\bq-\bq_h\|_K\leq\|\bq_h+\grad\nu_h\|_K+\|\grad(u-\nu_h)\|_K $, we conclude that
	\begin{align}
		\label{eq:reliability_prev2_alt}
		\|\bq-\bq_h\|_K&\leq \eta_K+\|\grad(u-\nu_h)\|_K.
	\end{align}
	For the jump term in the error, we have
	\begin{align}
		\big\|\!\jmp{u-\nu_h}\big\|_{F_{\rm i}}&=\big\|\!\jmp{\nu_h}\big\|_{F_{\rm i}}\qquad\text{and}\qquad\|u-\nu_h\|_{F_{\rm e}}=\|u_D-\nu_h\|_{F_{\rm e}}\,,	\label{eq:reliability_jump}
	\end{align}
	for $ F_{\rm i}\in\CF_K^{\rm i} $ and $ F_{\rm e}\in\CF_K^{\rm e} $, thanks to the fact that $ \jmp{u}_F=0 $ for all $ F_{\rm i}\in\CF_K^{\rm i} $ (since $ u\in H^1(\Omega) $) and $ u=u_D $ on $ \partial\Omega $. 
	
	Finally, \eqref{eq:reliability_alt} is a direct consequence of adding \cref{eq:reliability_prev_alt,eq:reliability_prev3_alt,eq:reliability_prev2_alt,eq:reliability_jump} over all $ K\in\CT_h $, and the upper bound for $ \|\grad(u-\nu_h)\|_{\CT_h} $ provided by Theorem \ref{rel}.
\end{proof}

\section*{Acknowledgments}
The authors	want to thank Carlos G\'onzalez-Moraga and Jos\'e Hasbani for helping with preliminary numerical experiments, and Professor David Pardo for his insightful suggestions.

	\bibliographystyle{siam}
	\bibliography{muga_rojas_vega_2022a}

\end{document}